\newcommand{\compactlist}{\begin{list}{$\bullet$}{\setlength{\leftmargin}{1em}}}
\def\zz{{\bf Z}}
\def\qq{{\bf Q}}
\def\cc{{\bf C}}
\def\co{\colon\thinspace}
\def\calg{\mathcal{G}}
\newtheorem{theorem}{Theorem}
\newtheorem{lemma}[theorem]{Lemma}
\newtheorem{corollary}[theorem]{Corollary}
\theoremstyle{definition}
\newtheorem{definition}[theorem]{Definition}
\def\co{\colon\thinspace}
\numberwithin{equation}{section}
\begin{document}

\title{Concordance of Bing Doubles and Boundary Genus}
 \author{Charles Livingston}\author{Cornelia Van Cott}
\thanks{This work was partially supported by  NSF-DMS-0707078 and  NSF-DMS-1007196}
 
\address{Charles Livingston: Department of Mathematics, Indiana University, Bloomington, IN 47405 }
\email{livingst@indiana.edu}
\address{Cornelia Van Cott: Department of Mathematics, University of San Francisco, San Francisco, CA  94117}
\email{cvancott@usfca.edu}


 \begin{abstract} Cha and Kim proved that if a knot $K$ is not algebraically slice, then no iterated Bing double of $K$ is concordant to the unlink.  We prove that if $K$ has nontrivial signature $\sigma$, then the $n$--iterated Bing double of $K$ is not concordant to any boundary link with boundary surfaces of genus less than $2^{n-1}\sigma$.  The same result holds with $\sigma$ replaced by $2\tau$, twice the Ozsv\'ath-Szab\'o knot concordance invariant.\end{abstract}

\maketitle

\section{Introduction}
 
The construction of Bing doubling, introduced in~\cite{bing},  converts a knot $K \subset S^3$ into a two component link $B(K)$.  The construction can be iterated, producing an entire family of links $\{B_n(K)\}$, where $B_n(K)$ has $2^n$ components.  Figure~\ref{example} illustrates the figure eight knot $K$, its Bing double $B(K)$, and the second iterated Bing double $B_2(K)$.   

From several perspectives, Bing doubles are indistinguishable from each other.  For example, all Bing doubles have vanishing Arf invariants, vanishing multivariable Tristram-Levine signatures, and trivial multivariable Alexander polynomial~\cite{cim}.  In addition, all Bing doubles are boundary links, and hence have vanishing Milnor $\bar{\mu}$ invariants~\cite{cim}.  Moreover, the links $B_n(K)$ bound identical embedded $2^n$ component surfaces in $B^4$ as follows: for every knot $K$, $B_n(K)$ bounds a surface where one component  is a punctured torus and the rest are disks (see Section~\ref{sectionbackground}).  

The perspective from which Bing doubles become interesting is the perspective of concordance.  Some -- but not all -- Bing doubles are slice.  Work of Casson and Freedman in four-dimensional surgery first highlighted the importance of understanding when Bing doubles are slice~\cite{fq}.  Recent years have seen significant progress in this direction; references include~\cite{cha, ck, clr, cim, chl, harvey, levine, teich, vc}.  These investigations  culminated with the following result  of   Cha-Kim~\cite{ck}: If $K \subset S^3$ is not algebraically slice, then the $n^{th}$--iterated Bing double of $K$, $B_n(K)$, is not slice. 

\begin{figure}[h]
\begin{center}
\includegraphics[scale=.1]{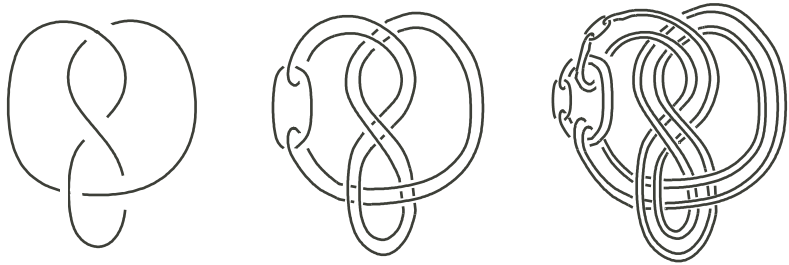}
\end{center}
 \caption{
The figure eight knot,  $K$,   $B(K)$, and   $B_2(K)$. }
\label{example}
\end{figure}

Our results point out additional ways in which iterated Bing doubles are distinguished.  First, in Section~\ref{s3section}, we show that if each member of a family of iterated Bing doubles $\{B_n(K)\}$ bounds disjoint surfaces in $S^3$, the genera of the surfaces increase exponentially in $n$.   (Compare this to the parallel result for surfaces in $B^4$ mentioned above.)  Furthermore, if a link $J$ is concordant to $B_n(K)$, then the genera of the surfaces which the link $J$ bounds will increase exponentially in $n$.  


To make this precise, let $\nu(K)$ be any additive knot invariant for which $\text{genus}(F) \ge |\nu(K)|$ for all $K$ and for all pairs $(W,F)$ where $W$ is a $\zz_{(2)}$--homology ball  and $\partial(W,F) = (S^3, K)$.  We further assume that $\nu(K)=\nu(K^r)$ for all $K$, where $K^r$ denotes the knot $K$ with orientation reversed.   Examples of such invariants include half the Murasugi signature function~\cite{mura}, $\frac{1}{2}\sigma(K),$  the normalized Levine-Tristram $p$--signatures~\cite{le2, tristram},  $\frac{1}{2}\sigma_{i/p}(K)$, and, in the smooth
 category,  the Ozsv\'ath-Szab\'o $\tau$ invariant~\cite{OzsSza}.     (See the appendix for a brief discussion of these invariants why they satisfy the given properties.)  

\begin{theorem} \label{mainthm}   Let $\nu$ be an additive knot invariant with properties   described above.  If the iterated Bing double $B_n(K)$ is concordant to a boundary link $ J = (J_1, \ldots , J_{2^n})  = \partial (F_1, \ldots , F_{2^n}),$ 
then genus($F_i) \ge 2^n|\nu(K)|$ for all $i$. 
\end{theorem}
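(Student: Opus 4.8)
The plan is to fix a component index $i$ and show $\text{genus}(F_i)\ge 2^n|\nu(K)|$ by converting the concordance/boundary-link datum into a properly embedded surface — bounded by an appropriate connected sum of copies of $K$ and $K^r$ — inside a $\zz_{(2)}$--homology $4$--ball, to which the defining property of $\nu$ applies. The construction will work for each $i$, so this suffices.

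First I would push everything into $B^4$. Write the given concordance as $C=C_1\sqcup\dots\sqcup C_{2^n}\subset S^3\times[0,1]$, with $C_j\cap(S^3\times\{0\})=B_n(K)_j$ and $C_j\cap(S^3\times\{1\})=J_j$. Cap the top off with a $4$--ball, push the surfaces $F_j$ into it, and set $G_j=C_j\cup F_j$. Then $G_1,\dots,G_{2^n}$ are disjoint properly embedded surfaces in $B^4$ with $\partial G_j=B_n(K)_j$ and $\text{genus}(G_j)=\text{genus}(F_j)$. So it is enough to bound below the genus of a properly embedded surface $G_i\subset B^4$ with $\partial G_i=B_n(K)_i$, knowing that $G_i$ is disjoint from surfaces bounded by the remaining components of $B_n(K)$.

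Next I would invoke the satellite description of Section~\ref{sectionbackground}: $B_n(K)$ is the satellite with companion $K$ of the pattern $B_n(U)$, the $2^n$--component unlink lying in its solid torus in the iterated-Bing fashion; equivalently $B_n(K)$ is obtained from the unlink by infection along an unknot $\eta$ using $K$. Let $D\subset S^3$ be the meridian disk of the companion solid torus $N(K)$, so $\partial D=\mu$ is the meridian of $K$. Since every component of an iterated Bing double has winding number zero in the companion solid torus, $D$ meets each $B_n(K)_j$ in exactly $2^n$ points, $2^{n-1}$ of each sign. The heart of the argument is to use $D$, together with the auxiliary disjoint surfaces bounded by $B_n(K)_j$ ($j\ne i$), to cut $(B^4,G_i)$ open along $D$ and reglue the companion exterior $E(K)$ — the four-dimensional analogue of "re-tying $K$ into the strands crossing $D$", paralleling the $S^3$ computation of Section~\ref{s3section}. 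The $2^{n-1}$ positively crossing strands then acquire copies of $K$ and the $2^{n-1}$ negatively crossing strands copies of $K^r$; the Bing clasping is precisely what prevents a positive copy from being cancelled against a negative one, and what forces the regluing to be performed along $E(K)$, which in turn is what makes the resulting $4$--manifold only a $\zz_{(2)}$--homology ball rather than $B^4$. The output is a $\zz_{(2)}$--homology $4$--ball $W$ with $\partial W=S^3$ containing a properly embedded surface $G_i'$ with $\partial(W,G_i')=(S^3,K')$, where $K'$ is the connected sum of $2^{n-1}$ copies of $K$ with $2^{n-1}$ copies of $K^r$, and, because $D$ and the auxiliary surfaces enter only through disks, $\text{genus}(G_i')\le\text{genus}(G_i)$.

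Finally, the defining property of $\nu$ gives $\text{genus}(F_i)=\text{genus}(G_i)\ge\text{genus}(G_i')\ge|\nu(K')|$, and additivity of $\nu$ together with $\nu(K^r)=\nu(K)$ gives $|\nu(K')|=|2^{n-1}\nu(K)+2^{n-1}\nu(K^r)|=2^n|\nu(K)|$, which is Theorem~\ref{mainthm}. The step I expect to be the genuine obstacle is the cut-and-reglue of the preceding paragraph: making the surgery precise, checking that it yields a $\zz_{(2)}$--homology ball, and — above all — arranging that the genus cost is $\text{genus}(G_i)$ alone rather than $\sum_j\text{genus}(G_j)$ and that no copy of $K$ cancels a copy of $K^r$. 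This is exactly the point at which the Bing clasp structure, rather than an arbitrary winding-number-zero pattern (for which the statement is false), is used in an essential way.
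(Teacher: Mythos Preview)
Your approach is quite different from the paper's, and the central step --- ``cut $(B^4,G_i)$ open along $D$ and reglue the companion exterior $E(K)$'' --- is not a well-defined construction as stated, and I do not see how to make it one. The disk $D$ is two-dimensional and sits in $S^3=\partial B^4$; to cut a $4$--manifold you need a properly embedded $3$--manifold, and there is no canonical extension of $D$ into $B^4$ whose intersection with $G_i$ you control. Any such extension meets $G_i$ in a $1$--manifold of uncontrolled complexity, so there is no reason the genus survives. Likewise $E(K)$ is a $3$--manifold, so ``regluing $E(K)$'' into a $4$--manifold is dimensionally ambiguous; presumably you mean some thickening, but how it is attached, why the result is a $\zz_{(2)}$--homology ball, and why the boundary knot is forced to be $2^{n-1}(K\# K^r)$ rather than, say, the unknot, are all missing. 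You correctly flag that for a generic winding-number-zero pattern the copies of $K$ and $K^r$ would cancel and assert that the Bing clasping prevents this --- but that assertion \emph{is} the content of the theorem, and nothing in the sketch explains how the clasping is detected by a cut-and-paste inside $B^4$. The auxiliary surfaces $G_j$, $j\ne i$, are invoked but never actually used.

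The paper's argument sidesteps all of this by working upstairs rather than downstairs: instead of trying to extract $K$ from the pattern by surgery on $B^4$, it takes iterated $2$--fold branched covers of $S^3\times I$ over components of the concordance. On the $B_n(K)$ end these covers are branched over unknots, so that boundary remains $S^3$, and the combinatorics of Covering Theorems~A and~B (Section~3) convert the marked component into $2^{n-1}(K\# K^r)$; on the $J$ end the branch loci may be knotted, but because $J$ is a \emph{boundary} link the surfaces $F_j$ lift homeomorphically at every stage by Lemma~\ref{coverlemma}. Lemmas~\ref{lem1} and~\ref{lem2} guarantee each successive cover retains the $\zz_{(2)}$--homology of $S^3$, so the process iterates; capping off the final $J$--side boundary with a $\zz_{(2)}$--homology ball then yields the pair $(W,F)$ to which the hypothesis on $\nu$ applies. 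The Bing structure thus enters through the explicit covering-link calculus of Theorem~\ref{cor3}, not through any $4$--dimensional regluing.
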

 
 \vskip.1in
\noindent{\bf Note on coefficients\ }   Notice that one of the properties of $\nu$ include the use of coefficients in $\zz_{(2)}$.  This is the integers localized at two, the ring of rational numbers expressible with odd denominator.  Although much of our work could be modified so that results could be stated in the $\zz$--coefficient setting, the proofs demand working with $\zz_{(2)}$.        Furthermore, the use of $\zz_{(2)}$ (as opposed to $\zz/2\zz$)  is consistent with the past work on the subject which we will be citing.    \vskip.1in

We illustrate an application of this theorem with an example.  Let $  D(T_{2,3}) $ denote the positive Whitehead double of the trefoil knot.  Since $  D(T_{2,3}) $ has trivial Alexander polynomial, it is topologically slice~\cite{freed}.  However, as first shown in~\cite{liv, plam}, the  Ozsv\'ath-Szab\'o $\tau$ invariant   is nontrival.  Thus we have the following corollary.\vskip.1in

\begin{corollary}    Consider the knot $K = D(T_{2,3})$, the untwisted Whitehead double of the trefoil knot.  For all $n$,  $B_n(K)$ is topologically concordant to the unlink,  but if it is smoothly concordant to a boundary link $(J_1, \ldots , J_{2^n})  = \partial (F_1, \ldots , F_{2^n}),$
then genus($F_i) \ge 2^n$ for all $i$.
\end{corollary}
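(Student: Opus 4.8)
The plan is to obtain the Corollary as the instance of Theorem~\ref{mainthm} in which $K=D(T_{2,3})$ and $\nu=\tau$, the Ozsv\'ath--Szab\'o invariant. As recorded in the list of admissible invariants preceding the theorem (and checked in the appendix), $\tau$ is additive, satisfies $\nu(K)=\nu(K^r)$, and obeys $\text{genus}(F)\ge|\tau(K)|$ over $\zz_{(2)}$--homology balls; its one special feature is that it is defined only in the smooth category, which is why the genus conclusion of the Corollary is a smooth statement.

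First I would record the topological triviality. Since $D(T_{2,3})$ has trivial Alexander polynomial, Freedman's theorem~\cite{freed} shows it is topologically slice, hence topologically concordant to the unknot $U$. Bing doubling is a satellite construction, so it preserves topological concordance; consequently $B_n(D(T_{2,3}))$ is topologically concordant to $B_n(U)$, which is the $2^n$--component unlink (since the $0$--framed tubular neighborhood of the unknot is standardly embedded, the Bing double of the unknot is the $2$--component unlink, and one iterates). This gives the first assertion.

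For the genus bound I would simply quote that $\tau(D(T_{2,3}))=1$, as noted just above the Corollary and established in~\cite{liv, plam}, so $|\nu(K)|=1$ for this choice of $K$ and $\nu$. Theorem~\ref{mainthm} then asserts: if $B_n(K)$ is smoothly concordant to a boundary link $(J_1,\dots,J_{2^n})=\partial(F_1,\dots,F_{2^n})$, then $\text{genus}(F_i)\ge 2^n|\nu(K)|=2^n$ for every $i$, which is the second assertion. There is no substantive obstacle here: the argument reduces entirely to the two inputs above. The only point worth flagging --- and it is expository rather than technical --- is the reason the conclusion must be smooth: the signature-type invariants that power the other applications of Theorem~\ref{mainthm} all vanish on the topologically slice knot $D(T_{2,3})$ (and on its iterated Bing doubles), so $\tau$ is precisely the invariant that survives, and it is what forces the boundary genus to grow exponentially.
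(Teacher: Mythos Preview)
Your proposal is correct and matches the paper's approach exactly: the paper derives the corollary directly from Theorem~\ref{mainthm} with $\nu=\tau$, using Freedman's theorem for the topological sliceness of $D(T_{2,3})$ and the computation $\tau(D(T_{2,3}))=1$ from~\cite{liv, plam}. Your additional remarks (that Bing doubling preserves topological concordance and that $B_n(U)$ is the unlink) simply make explicit what the paper leaves implicit.
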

 \vskip.1in
\noindent{\it Acknowledgements} Thanks go to Jae Choon Cha, Stefan Friedl, Matt Hedden, and Danny Ruberman for     conversations  and insights regarding this work. 


\section{The 4--genus of iterated Bing doubles}\label{sectionbackground}
The following result indicates one reason why, from the 4--dimensional perspective, the appropriate measure of complexity for an iterated Bing double $B_n(K)$ is not given by the 4--genus, but rather by the minimum 3--genus of boundary links concordant to $B_n(K)$.

\begin{theorem}For any knot $K \subset S^3$ and $n\ge 1$, there exists a collection of disjoint surfaces $(F_1, \ldots, F_{2^n})$ embedded in $B^4$ such that $\partial (F_1, \ldots, F_{2^n}) = B_n(K)$, genus($F_1) = 1$, and $F_i \cong B^2$ for $i>1$.

\end{theorem}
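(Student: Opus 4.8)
The plan is to induct on $n$, building the surfaces from the standard surfaces for $B(K)$ and Bing-doubling one component at a time; the key point is that every component of every iterated Bing double is unknotted, so each individual Bing-doubling is a \emph{local} modification of a surface in $B^4$.

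\emph{Base case $n=1$.} One uses the standard structure of $B(K)=J_1\cup J_2$: the Bing pattern $P_1\cup P_2$ in the solid torus $V$ bounds disjointly a once-punctured torus and a disk in $V$. Concretely, $J_2$ bounds an embedded disk $\Delta\subset S^3$ meeting $K$ transversely in two points of opposite sign and disjoint from $J_1$, while $J_1$ bounds an embedded once-punctured torus $\Sigma\subset S^3$ disjoint from $\Delta$, one handle curve of which is a parallel copy of $K$. Pushing $\Sigma$ and $\Delta$ into $\operatorname{int}(B^4)$ rel boundary gives disjoint surfaces $F_1=\Sigma$ of genus $1$ and $F_2=\Delta\cong B^2$ with $\partial(F_1\sqcup F_2)=B(K)$.

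\emph{Inductive step.} Suppose $B_n(K)=\partial(F_1,\dots,F_{2^n})$ with $F_1$ of genus $1$, $F_i\cong B^2$ for $i>1$, and the $F_i$ disjointly embedded in $B^4$. Each $J_i:=\partial F_i$ is an unknot, and $B_{n+1}(K)$ is obtained from $B_n(K)$ by replacing each $J_i$ with its two-component Bing double inside a tubular neighborhood $N(J_i)\subset S^3$. Choose regular neighborhoods $\nu(F_i)\cong F_i\times D^2$ with $\nu(F_i)\cap S^3=N(J_i)$; these are pairwise disjoint, so it suffices to replace, for each $i$, the surface $F_i$ inside $\nu(F_i)$ by disjoint surfaces with boundary $B(J_i)$ of the prescribed types. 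For $i>1$ this is immediate: $\nu(F_i)\cong B^2\times D^2=B^4$, and $N(J_i)$ is the unknotted solid torus in $\partial\nu(F_i)=S^3$, so $B(J_i)$ is the Bing double of the unknot, which is strongly slice and hence bounds two disjoint disks in $\nu(F_i)=B^4$.

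\emph{Main obstacle.} The case $i=1$ is the crux: one must show that the Bing double of the core $\partial F_1$ of $\nu(F_1)\cong F_1\times D^2$ (with $F_1$ a once-punctured torus) bounds, inside $F_1\times D^2$, a disjoint once-punctured torus and disk. Here $\nu(F_1)$ is not a $4$--ball; because the Bing-double components have winding number $0$ there is a genuine homological obstruction to replacing the collar of $F_1$ by an annulus, so this does not reduce directly to the $i>1$ case. I would prove it by a relative version of the argument that the Bing double of the unknot is strongly slice, performing the Whitney moves that undo the clasps of the Bing tangle over $F_1$ rather than over a disk, so that the trivial disk factor of the Bing pattern becomes a disk while the companion punctured torus is carried along intact. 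Alternatively, one can dispense with the induction and argue directly by a movie in $B^4$: starting from $B_n(K)\subset S^3$, perform over $I$ the band moves that successively un-Bing-double the binary tree of clasps, checking that all but one of the resulting pieces is a disk—using that the Bing double of the unknot is strongly slice—while the last piece is capped off by the genus-one surface produced in the base case.
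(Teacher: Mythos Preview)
Your inductive scheme is more elaborate than needed, and both the base case and the key inductive step contain real gaps.

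In the base case you assert that $J_2$ bounds a disk $\Delta\subset S^3$ disjoint from $J_1$. For nontrivial $K$ this is false: such a disk would split $B(K)$ into two unknots, making it the unlink and hence slice, contradicting everything this paper is about (and directly contradicting Theorem~\ref{cor3}, which forces any surface for $J_2$ in $S^3$ disjoint from $J_1$ to have genus at least $2g_3(K)$). What \emph{is} true is that your $\Sigma$---a punctured torus for $J_1$ in $N(K)$ disjoint from $J_2$---exists, and $J_2$, being unknotted, bounds a disk in $B^4$. If you push $\Sigma$ to one radial level and cap $J_2$ with a Seifert disk pushed to a strictly deeper level, the two surfaces are disjoint (the key point being that $\Sigma$ misses $J_2$, so the product annulus over $J_2$ misses $\Sigma$). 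So the base case is salvageable, but not as written.

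For the inductive step you correctly isolate $i=1$ as the obstacle and then do not prove it; the ``relative Whitney move'' and ``movie'' sketches are too vague to constitute an argument. The same collar trick that repairs the base case in fact resolves the $i=1$ step as well: inside $N(J_1)\times[0,1]\subset\nu(F_1)$, cap $P_1$ at one level with the punctured torus in $N(J_1)$ missing $P_2$, then cap $P_2$ at a deeper level with the disk it bounds in $N(J_1)$ (such a disk exists because a single Bing component is trivial in the solid torus). But once you see this, the induction is superfluous. The paper gives two direct arguments. Its \emph{main} proof: two crossing changes of opposite sign between one component $K_1$ and its Bing partner trivialize the entire link $B_n(K)$; the trace is a singular concordance to the unlink with exactly two double points between two of the disks, and tubing one disk along the other at those points converts it into a punctured torus disjoint from the remaining disks. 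Its stated \emph{alternative} is essentially the corrected version of your base case applied directly for every $n$: one component of $B_n(K)$ bounds a punctured torus in $S^3$ in the complement of the others, and the remaining components form an unlink and so bound disjoint disks in $B^4$.
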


\begin{proof} It is clear from the diagram that for any one component, say $K_1$, of $B_n(K)$, there is a second component $K_2$ such that by changing two crossings (of opposite sign) between them, they become unlinked.  For instance, in Figure~\ref{example} illustrating $B_2(4_1)$, either of the smallest circles can be unclasped from its adjoining loop by changing two crossings.  Notice that once that one circle is unlinked, the entire link becomes trivial.

The trace of the homotopy corresponding to those crossing changes yields a singular concordance from $B_n(K)$ to the unlink, with the only singularities being a pair of double points between two embedded annuli.  Capping the annuli off with disks in $B^4$ yields a set of singular disks in $B^4$ bounded by $B_n(K)$ with the only singularities being a pair of double points between two of the disks.  

If the two double points are $a$ and $b$ on disks $D_1$ and $D_2$, then disk neighborhood of $a$ and $b$ on $D_1$ can be removed and replaced by an annulus in the boundary of a tubular neighborhood of an embedded arc on $D_2$ that joins $a$ to $b$.  The effect is to convert $D_1$ into a punctured torus that is disjoint from $D_2$ as well as all the other disks.  (That the resulting surface is orientable follows from the fact that the two crossing changes were of opposite sign.) This yields the desired surfaces.

\end{proof}

There are other ways to prove this theorem.  Most simply, any one component bounds a punctured torus in $S^3$ in the complement of the remaining components.  The remaining components form an unlink, so bound disks in $B^4$.  The given proof has the advantage that it also shows that the {\it clasp number} of $B_n(K)$ is 2.  (See~\cite{my} for one discussion of the clasp number of knots.)  The clasp number is another 4--dimensional measure of the complexity of a knot or link, but as we have just seen, it too is of limited value in studying Bing doubles. 

 
 \section{Tree notation for Bing doubling and two covering link theorems}
  Adam Levine~\cite{levine} introduced binary trees into the study of partially iterated Bing doubling, and we will make use of this notation.  Figure~\ref{partial} illustrates a tree diagram corresponding to a partially iterated Bing double of the figure eight knot $K$.  Nodes of the tree correspond to the components of the resulting link.  
  
  \begin{figure}[h]
\begin{center}
\psfrag{k}{\!\!\!$K$}
\includegraphics[scale=.17]{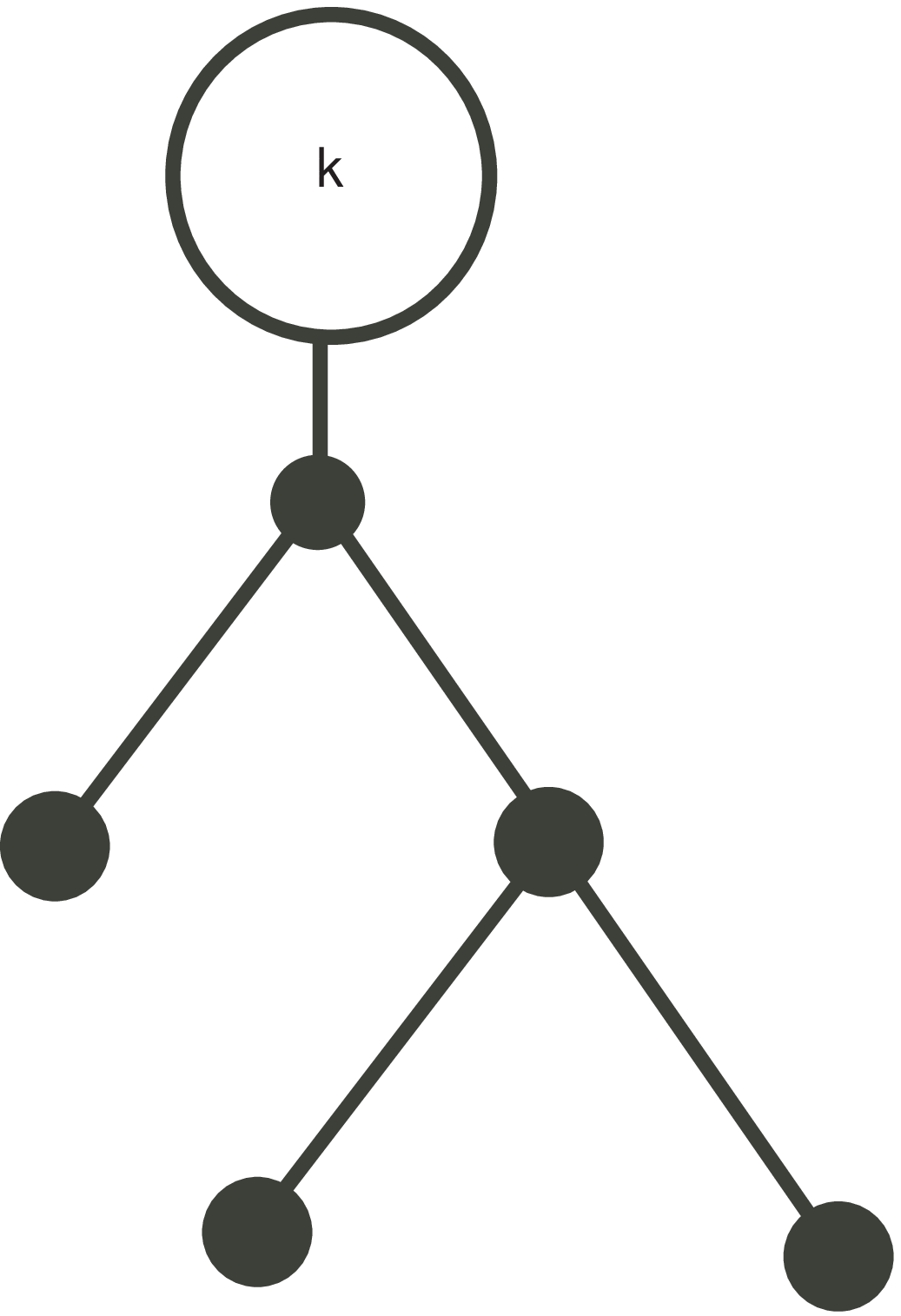}\hskip1in  \includegraphics[scale=.1]{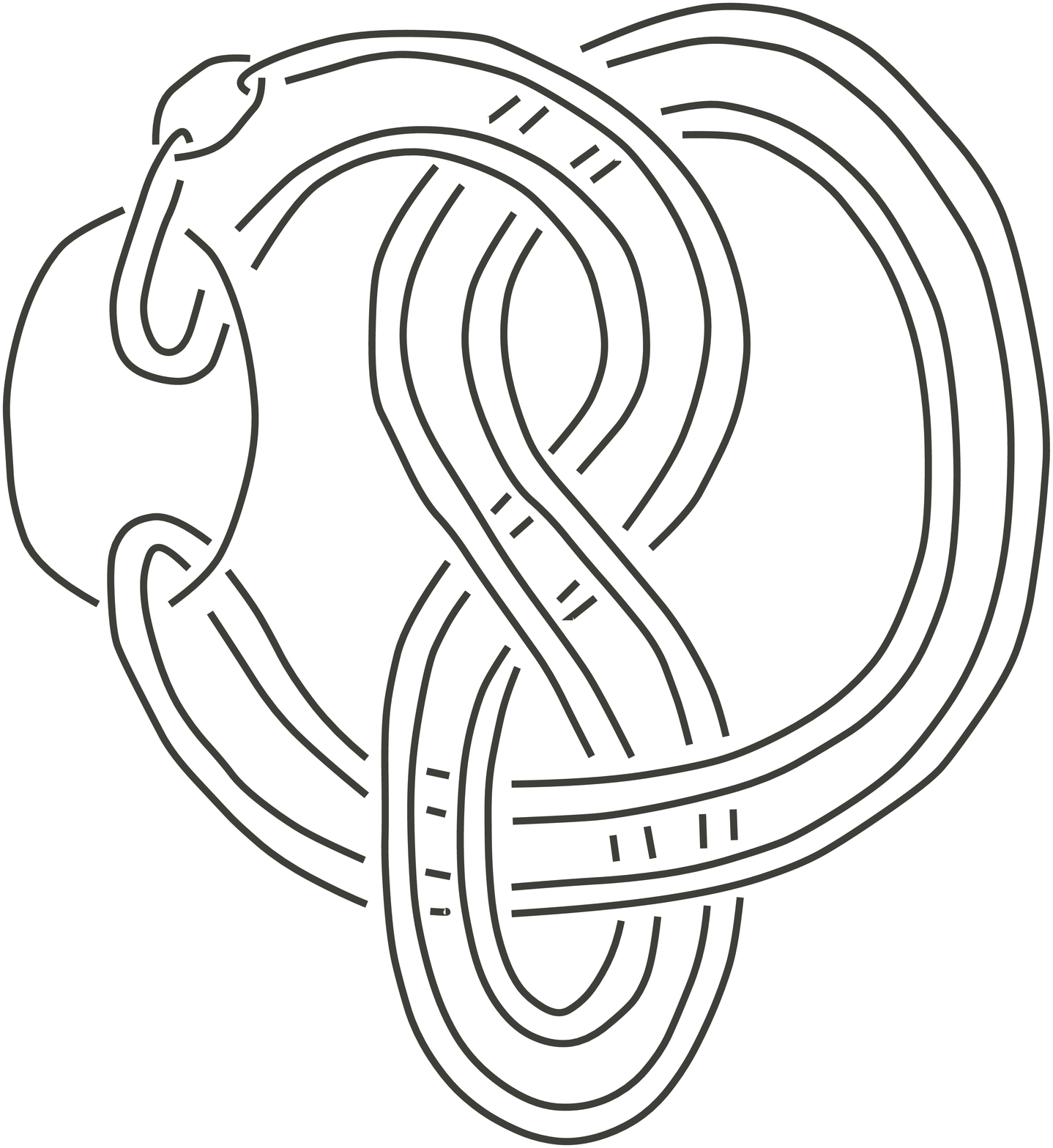}
\end{center}
\caption{}\label{partial}
\end{figure}
  
Let $L$ be a link in $S^3$, and let $M$ denote the 2--fold branched cover of $S^3$, branched over a component of $L$.  Any sublink of the lift of $L$ in $M$ is a {\em covering link} of $L$.  

We give two theorems about covering links.  The first is a generalization of a construction in~\cite{clr}.  In the statement of the theorem, $K^r$ denotes $K$ with its string orientation reversed.

\begin{theorem}[Covering Theorem A] Let $T$ be a binary tree with a marked node of depth 1, and let $T_-$ be the binary tree obtained by deleting this node and its root from $T$, as illustrated in Figure~\ref{formal}.  The link $B_{T_-}(K\#K^r)$ is a covering link of  $B_{T}(K)$.     
\end{theorem}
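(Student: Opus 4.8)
The plan is to construct the branched cover explicitly and identify the lift of the relevant sublink. Recall the standard description of Bing doubling in terms of solid tori: a node of depth~$1$ in the tree~$T$ corresponds to a component $K_0$ of $B_T(K)$ which sits inside a solid torus $V$ neighborhood of a core circle, and the entire rest of the link $B_T(K)$ lies in the complement of $V$ in the pattern determined by $T_-$, where $K_0$ plays the role of the ``axis'' circle of the Bing doubling operation at that node. More precisely, if $T$ has its marked depth-$1$ node $v$, then deleting $v$ together with the root produces $T_-$, and the sublink of $B_T(K)$ indexed by the nodes of $T_-$ sits in a tubular neighborhood of a Hopf circle linking $K_0$, arranged exactly as $B_{T_-}$ of an unknot. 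Forming $M$, the $2$--fold cover of $S^3$ branched over $K_0$, is the same as unrolling that solid-torus complement twice.

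First I would set up notation: write $W$ for the solid torus containing $K_0$ as a core, and $U = S^3 \setminus \mathrm{int}(W)$, also a solid torus, which contains the portion of $B_T(K)$ corresponding to $T_-$. Since $K_0$ is a core of $W$, the $2$--fold branched cover $M$ of $S^3$ over $K_0$ is again $S^3$, and $M$ is obtained by gluing the trivial double cover $\tilde W \to W$ (again a solid torus) to the connected double cover $\tilde U \to U$ of the solid torus $U$. Second, I would track what happens to the sublink $L_- \subset U$ indexed by $T_-$: its preimage $\tilde L_-$ in $\tilde U$ consists of two parallel copies of the pattern, but crucially the ``axis'' meridian circle of $U$ (the one about which the depth-$1$ Bing doubling was performed in forming $B_T(K)$) lifts to a single circle that double-covers it. Chasing through the definition of the Bing double pattern, this is precisely the statement that one component of the lift, together with the connect-sum behavior forced by going around the meridian twice, realizes $B_{T_-}(K \# K^r)$ — the reversal appearing because the second sheet traverses the knotting of $K$ (which is tied into the pattern at the leaves) with the opposite string orientation. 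Finally, I would select the sublink of $\tilde L_-$ that is this copy and invoke the definition of covering link to conclude.

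I expect the main obstacle to be the bookkeeping in the last step: verifying that the lift of the $T_-$-part of $B_T(K)$ really is $B_{T_-}(K \# K^r)$ and not merely ``$B_{T_-}$ of something concordant or isotopic to $K \# K^r$,'' and in particular getting the orientation reversal pinned down correctly. The cleanest way to handle this is to reduce to the base case, the depth-$1$ situation treated in~\cite{clr}: when $T$ is the tree with a root and two leaves, $B_T(K) = B(K)$, $T_-$ is a single node, and the claim is that $K \# K^r$ is a covering link of $B(K)$ in the double branched cover over one component — this is exactly the construction generalized here. For the general tree one then observes that the depth-$1$ Bing doubling is ``local,'' taking place in a ball (or solid torus) that meets the rest of the link only in the standard Hopf-link pattern, so the branched covering operation commutes with the further iterated Bing doublings encoded by the subtree hanging off the unmarked depth-$1$ node. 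Concretely I would: (i) isotope $B_T(K)$ so that the marked-node component $K_0$ is a round circle and the subtree $T_-$ sits in a neighborhood of a meridian disk's boundary; (ii) take the double cover, noting $\tilde U$ is a solid torus and the meridian of $U$ becomes a longitude traversed twice; (iii) recognize that traversing the $K$-knotted pattern twice with reversed orientation on the second pass yields the connect sum $K \# K^r$ at each leaf, so the lifted pattern is $B_{T_-}$ applied to $K\#K^r$; and (iv) pick out the appropriate sublink. The orientation-reversal point is where I would be most careful, since it is the crux of why $K^r$ rather than $K$ appears, and it is exactly the feature that makes $\nu(K) = \nu(K^r)$ a needed hypothesis downstream.
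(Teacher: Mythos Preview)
Your strategy is the paper's: take the $2$--fold cover of $S^3$ branched over the depth--$1$ component $K_0$, note that the solid torus $S$ containing the remaining components lifts to two homeomorphic copies, and identify the link inside one copy $\widetilde S$ as $B_{T_-}(K\#K^r)$. Your reduction to the base case of~\cite{clr} together with the observation that the iterated doubling inside $S$ is carried along unchanged by a homeomorphic lift is exactly how one justifies the paper's ``observe'' step.

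The execution, however, has the geometry wrong in two places. First, the other Bing component $K_1$ (the core of $S$) has linking number \emph{zero} with $K_0$; the two components of a Bing double do not form a Hopf link. This null-homotopy in $U=S^3\setminus N(K_0)$ is precisely why $S$ lifts to two disjoint homeomorphic copies rather than a connected double cover, and it also means your statement about the meridian of $U$ is backwards: the meridian of $U$ (a longitude of $K_0$) has linking number zero with $K_0$ and lifts to two circles, while it is the \emph{core} of $U$ (a meridian of $K_0$) that lifts $2$--to--$1$. Second, in this paper's conventions the knot $K$ lives at the \emph{root} of the tree, not at the leaves: $B_T(K)$ is the standard iterated Bing pattern placed in a solid torus which is then embedded in $S^3$ as a neighborhood of $K$. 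The connect sum $K\#K^r$ therefore appears as the knot type of the core of the lifted torus $\widetilde S$ in the cover (this is exactly the~\cite{clr} picture), and the link inside $\widetilde S$ is then automatically $B_{T_-}$ of that core; there is nothing to track ``at each leaf.'' Once these two points are corrected, your argument coincides with the paper's.
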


\begin{figure}[h]
\begin{center}
\psfrag{T}{\Large $T$}
\psfrag{S}{\Large $T_-$}
\includegraphics[width=4cm]{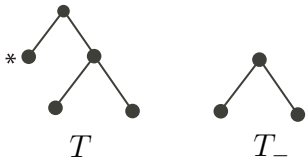}
\end{center}
\caption{A tree $T$ with marked node of depth one, and the associated tree $T_-$.}\label{formal}
\end{figure}

\begin{proof} We begin with the link $B_{T}(K) = (K_0, K_1, \cdots, K_m)$, where $K_0$ is the component of $B_{T}(K)$ corresponding to the marked node of $T$ of depth one.  Because $T$ has a node of depth one, it follows that $B_T(K)$ can be represented by the diagram on the left in Figure~\ref{base}, where the solid torus $S$ contains all components of $B_T(K)$ except for $K_0$.  Taking the 2--fold cover of $S^3$ branched over $K_0$, the remaining components of the link $B_{T}(K)$ lift upstairs to the link   represented on the right in Figure~\ref{base}, where each  of the two  components  of the preimage of $S$ maps homemorphically to $S$. Denote one of those components,  $\widetilde{S}$.  Observe that the sublink of the upstairs link consisting of the components inside the solid torus $\widetilde{S}$ forms $B_{T_-}(K\#K^r)$.  Hence $B_{T_-}(K\#K^r)$ is a covering link of $B_{T}(K)$.  
\end{proof}

\begin{figure}

\psfrag{S}{\Large $S$}
\psfrag{K}{\!\!\Large $K$}
\psfrag{J}{\!\!\!\!\!\Large $K_0$}
\includegraphics[width=4.6cm]{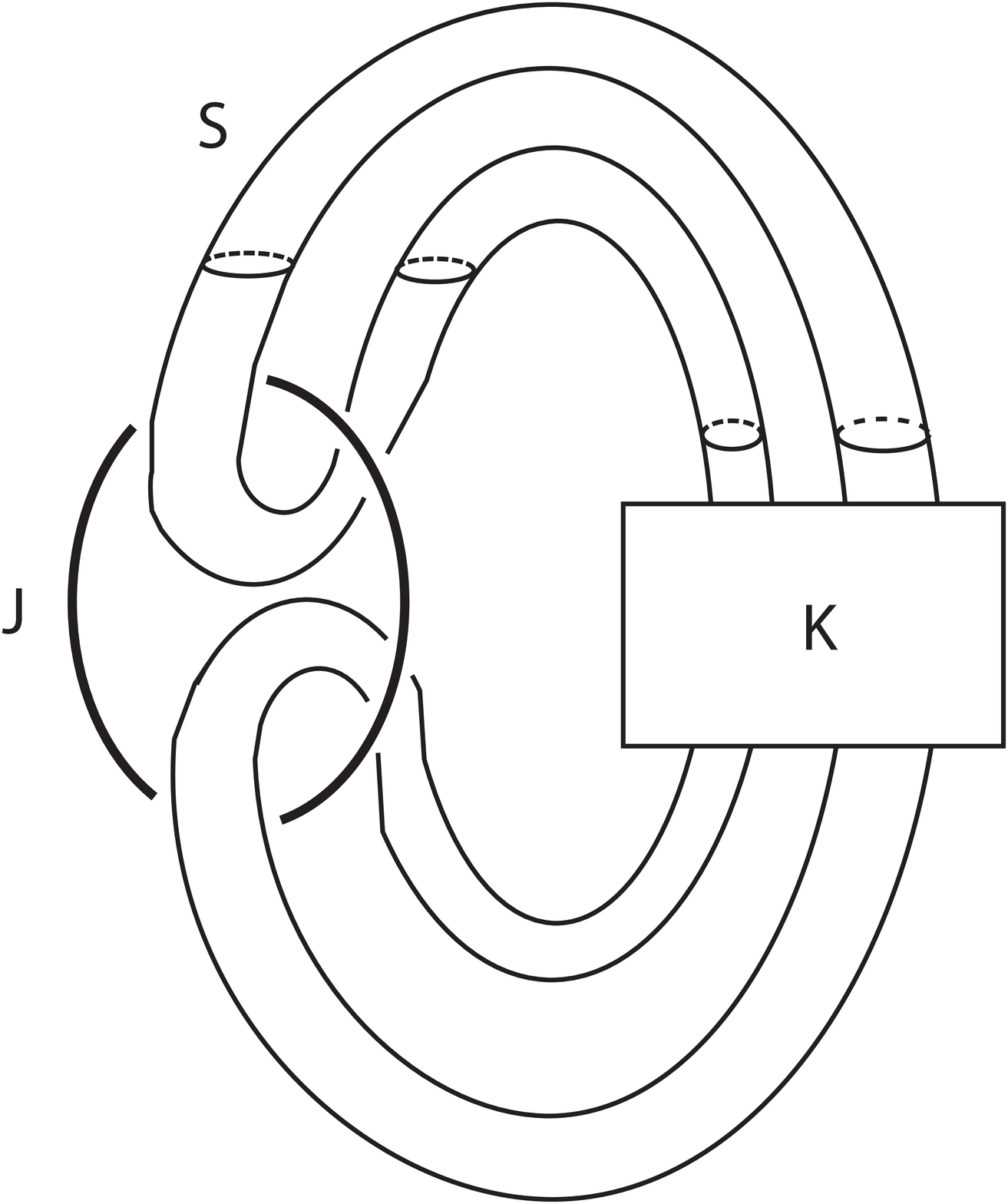}\hskip.4in
\psfrag{S}{\!\!\Large $\widetilde{S}$}
\psfrag{K}{\!\!\Large $K$}
\includegraphics[width=4cm]{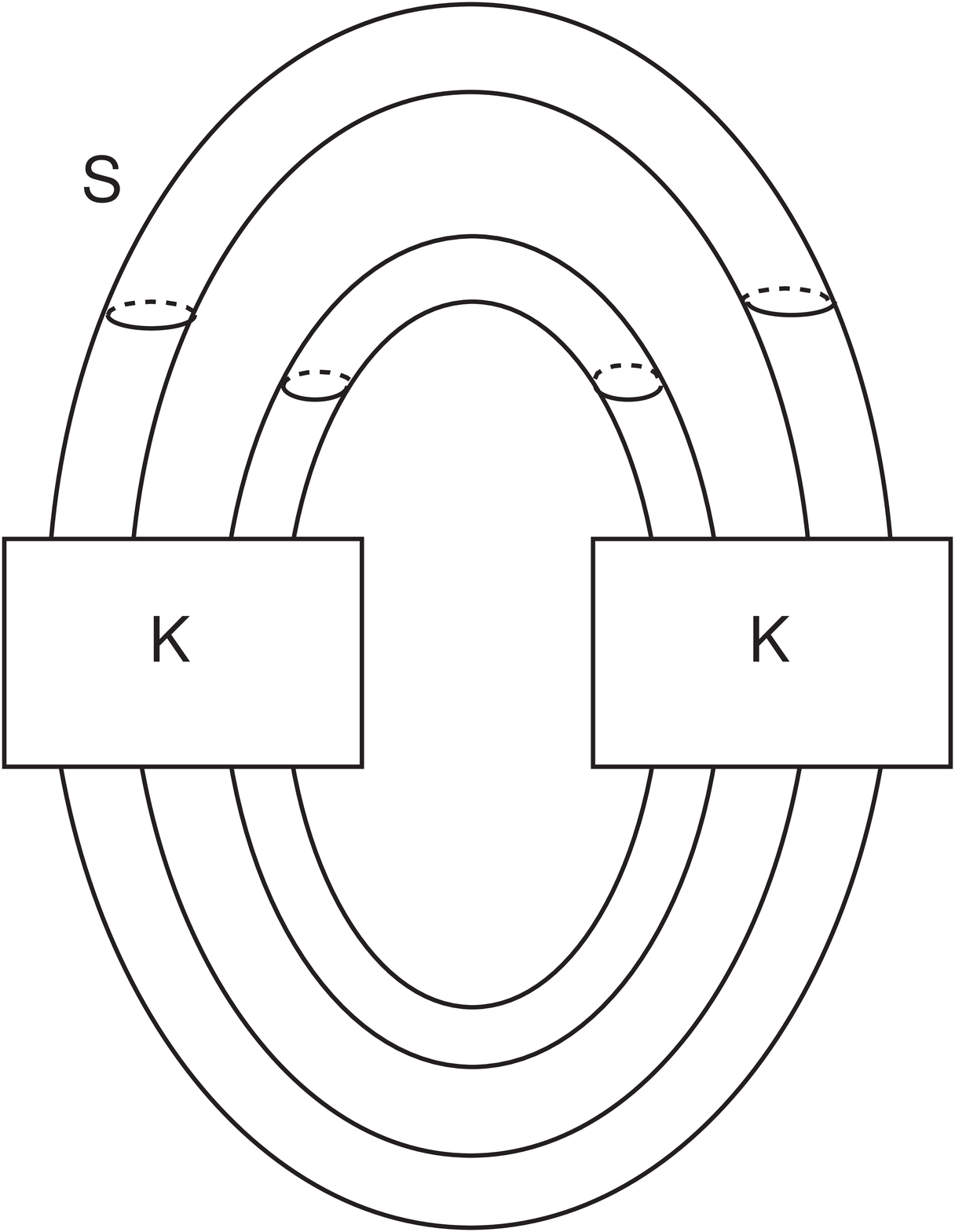}

\caption{$B_T(K)$ and the 2--fold branched cover over $K_0$.}\label{base}
\end{figure}

Our second covering theorem is essentially a result of~\cite{vc}. 

\begin{theorem}[Covering Theorem B]  Let $T$ be a nontrivial binary tree with a marked node, and let $T_+$ be the binary tree obtained by attaching a pair of nodes to the marked node of $T$, as illustrated in Figure~\ref{covB}.  Then $B_{T}(K)$ is a covering link of $B_{T_+}(K)$.

\end{theorem}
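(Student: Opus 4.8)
The plan is to mirror the proof of Covering Theorem A, but running the branched-covering construction in the opposite direction. The key observation is that attaching a pair of nodes to a marked node of $T$ — that is, passing from $B_T(K)$ to $B_{T_+}(K)$ — replaces a single component $K_v$ of $B_T(K)$ (the one corresponding to the marked node $v$) by the Bing double $B(K_v)$ of that component, carried out inside a tubular neighborhood of $K_v$. So $B_{T_+}(K)$ sits inside $S^3$ with a distinguished pair of components, say $(K_v', K_v'')$, occupying a solid torus $V$ that is a neighborhood of the original $K_v$, and these two components form the standard Bing double of the core of $V$. Outside $V$, the two links agree.

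First I would set up the local model: inside the solid torus $V$, the pair $(K_v', K_v'')$ is the Bing double of the core, and I would single out one of them — say $K_v'$ — as the branching component. Then I would take the $2$--fold branched cover $M$ of $S^3$ branched over $K_v'$. Since $K_v'$ is an unknot in $S^3$ (every component of an iterated Bing double is unknotted), $M$ is again $S^3$. The point is to understand what happens to the second component $K_v''$ and to the part of the link lying outside $V$. The complement of $K_v'$ deformation retracts in a controlled way, and the preimage of $V$ in $M$ is again a solid torus $\widetilde V$; the lift of $K_v''$ inside $\widetilde V$ becomes a single unknotted circle isotopic to the core of $\widetilde V$. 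Meanwhile the portion of $B_{T_+}(K)$ lying outside $V$ was linked with $K_v'$ only through $V$, so it lifts to a sublink; choosing the right sublink of the total preimage (discarding one of the two lifts of the ``outside'' part and keeping the piece that reassembles correctly), one recovers exactly $B_T(K)$ with $V$'s core playing the role of the component $K_v$. Hence $B_T(K)$ is a covering link of $B_{T_+}(K)$. This is the content of the cited result of~\cite{vc}, and I would cite Figure~\ref{covB} for the schematic.

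The main obstacle is the careful bookkeeping of the preimage: one must verify that taking the $2$--fold branched cover over $K_v'$ does \emph{not} double the components coming from the rest of the tree, i.e.\ that those components lift with two preserved copies and one chooses one copy, exactly as in Covering Theorem A where the preimage of $S$ had two components each mapping homeomorphically to $S$. Here the roles are reversed — we are going from the ``larger'' tree $T_+$ down to $T$ — so one has to check that the Bing-doubling clasp in $V$ is precisely what is undone by the covering, collapsing the pair $(K_v', K_v'')$ down to a single core circle upstairs. This is a local computation in the solid torus $V$: the $2$--fold branched cover of a solid torus over a Bing-double component, with the other component remaining, yields a solid torus with a single core circle as the relevant lift. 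Once that local picture is nailed down, the global statement follows by naturality of the construction with respect to the tree structure, exactly as in the proof of Covering Theorem A.
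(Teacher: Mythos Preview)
Your overall strategy---take the $2$--fold branched cover of $S^3$ over one of the two new components $K_v'$ and exhibit $B_T(K)$ as a sublink of the preimage---is exactly what the paper does. The paper carries this out by placing all components \emph{other than} $K_0^a,K_0^b$ in a solid torus $S$ (the complement of your $V$), drawing the branched cover explicitly, and reading off $B_T(K)$ as a bold sublink of the resulting diagram.

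However, your local analysis inside $V$ contains a genuine error. You assert that the preimage $\widetilde V$ of $V$ is again a solid torus and that $K_v''$ lifts to a \emph{single} circle isotopic to its core. Neither holds. Because each component of a Bing double is null-homologous in the solid torus, both the meridian and the longitude of $\partial V$ have linking number zero with $K_v'$; hence $\partial V$ lifts to \emph{two} tori, so the preimage of $V$ has two boundary components and cannot be a solid torus. Likewise $\lk(K_v'',K_v')=0$, so $K_v''$ lifts to \emph{two} circles, not one. The correct picture is the complementary one: the solid torus $S=S^3\setminus\operatorname{int}(V)$ lifts to two copies $\widetilde S_1,\widetilde S_2$ (each carrying a homeomorphic copy of $K_1,\ldots,K_m$), the two lifts of $K_v''$ thread through these, and one lift of $K_v''$ together with the contents of one $\widetilde S_i$ assembles into $B_T(K)$. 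This is precisely what the paper's diagram encodes; your inside-out description can be repaired, but only after correcting the topology of $\widetilde V$ and the lift of $K_v''$.
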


\begin{proof}
Since $T$ is nontrivial, the link $B_{T_+}(K) = (K_0^a, K_0^b, K_1, \cdots, K_m)$ can be represented by the diagram on the left in Figure~\ref{link}, where the solid torus $S$ contains all components of the link except for $K_0^a$ and $K_0^b$ and the components $K_0^a$ and $K_0^b$ correspond to the two new nodes on $T_+$.  Taking the 2--fold branched cover of $S^3$ branched over $K_0^a$, the lifts of the remaining components of $B_{T_+}(K)$ in the cover are represented by the diagram on the right in  Figure~\ref{link}.  Observe that the sublink in the right hand diagram in Figure~\ref{link} which is denoted in bold represents the link $B_{T}(K)$.  Hence $B_{T}(K)$ is a covering link of $B_{T_+}(K)$. 
\end{proof}

\begin{figure}[b] 
\begin{center}
\psfrag{T}{\Large $T$}
\psfrag{S}{\Large $T_+$}
\includegraphics[width=5cm]{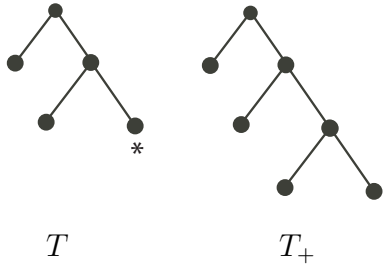}  
\end{center}
\caption{A nontrivial tree $T$ with marked node, and the associated tree $T_+$.}\label{covB}
\end{figure}

\begin{figure}[b]
\begin{center}
\psfrag{J1}{\!\!\!\!\!\!\!\!\! $K_0^a$}
\psfrag{J2}{\!\!\!\!\!\! \!\!\!  \!\!\! $K_0^b$}
\psfrag{T}{\!\!\!   $S$}
\includegraphics[width=6cm]{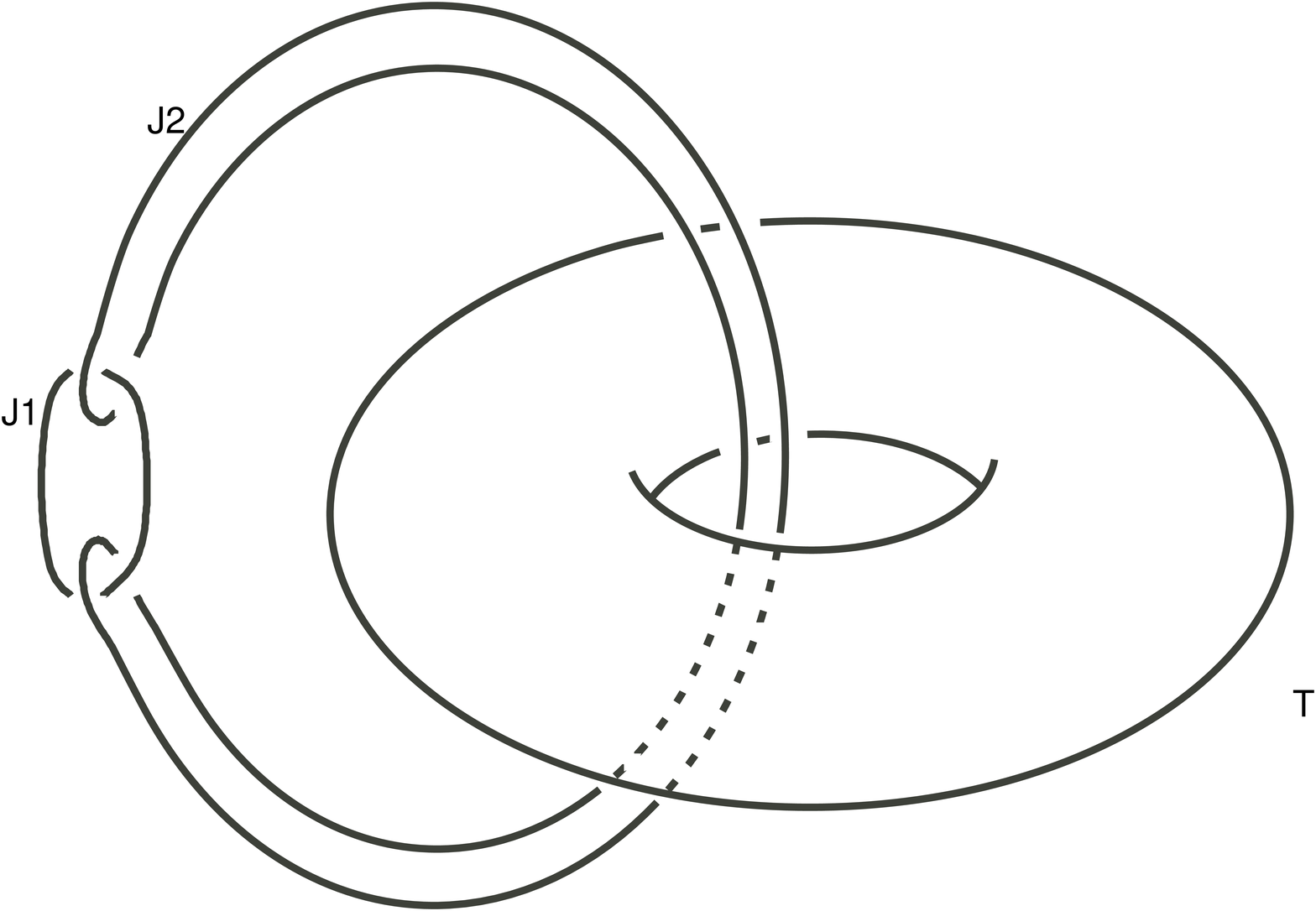}
\psfrag{b}{
}
\psfrag{c}{
}
\psfrag{d}{
}
\psfrag{e}{
}\hskip.2in
\includegraphics[width=6cm]{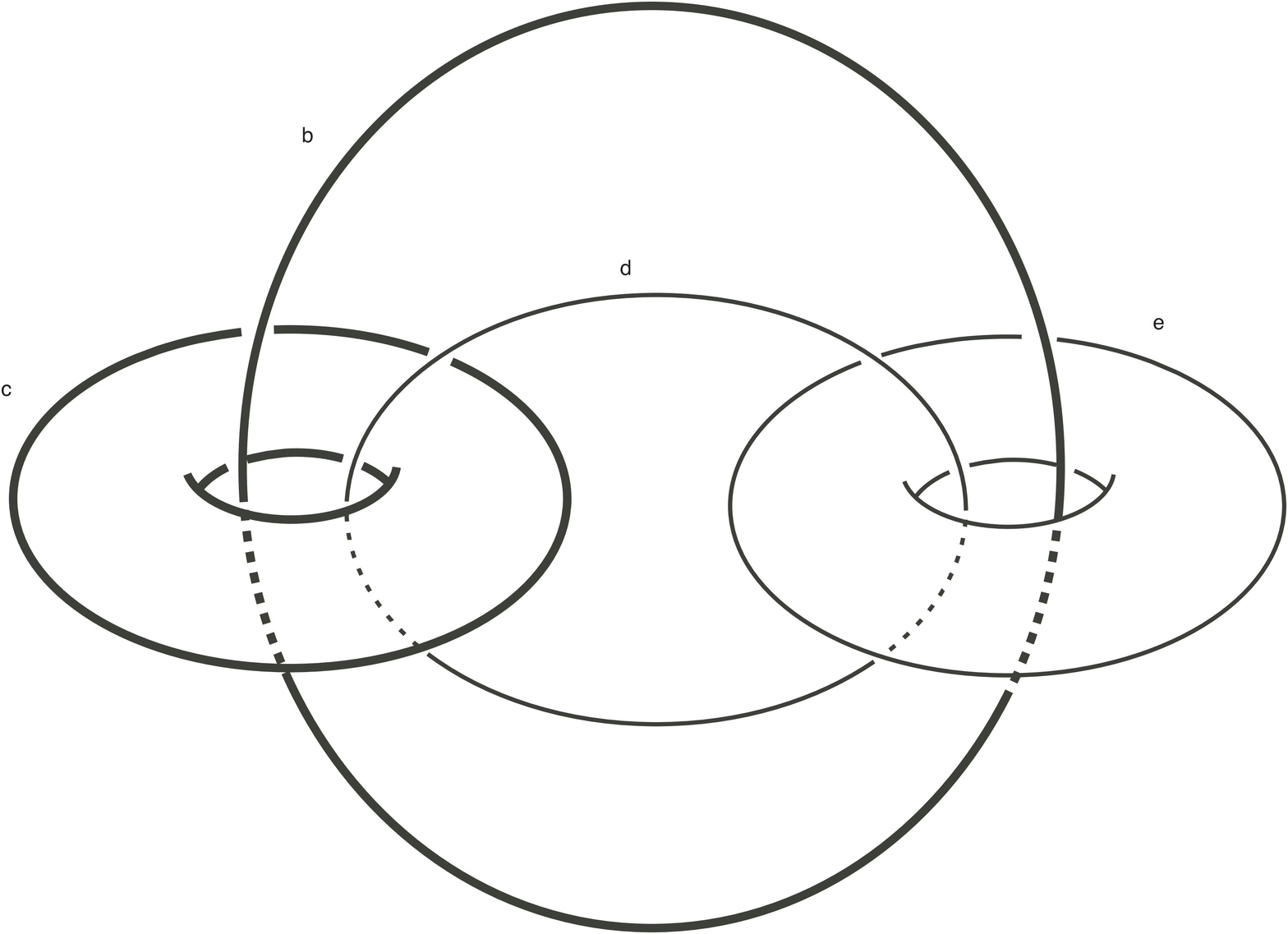}
\end{center}
\caption{$B_{T_+}(K)$ and its branched cover over $K_0^a$.}\label{link}
\end{figure}

Before we move forward in applying these theorems about covering links, we recall the following result concerning the existence of 2--fold branched covers. 

\begin{lemma}\label{coverlemma} Let $M$ be closed 3--manifold satisfying   $H_1(M, \zz_{(2)}) = 0$  and let $K = \partial F$  be a knot  in   $M$.  Then there is a unique connected $2$--fold cover $\widetilde{M}$ of $M$ branched over $K$.  If $G \subset M - F$ is a surface, the preimage of $G$ in  $\widetilde{M}$ consists of two surfaces, each projecting homeomorphically to $G$.  \end{lemma}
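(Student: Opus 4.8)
The plan is to reduce everything to elementary covering space theory. Connected $2$--fold covers of $M$ branched over $K$ correspond bijectively to connected $2$--fold covers of the complement $M-K$ whose monodromy sends a meridian $\mu$ of $K$ to the nontrivial element of $\zz/2\zz$; these in turn correspond to surjective homomorphisms $\phi\co H_1(M-K;\zz)\to\zz/2\zz$ with $\phi(\mu)\neq 0$. So the first step is to show such a $\phi$ exists and is unique.

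For existence I would use the Seifert surface directly: since $K=\partial F$, any loop $\gamma$ in $M-K$ made transverse to $F$ has a well-defined signed count $\gamma\cdot F\in\zz$, and $\gamma\mapsto\gamma\cdot F$ descends to a homomorphism $\lk(-,K)\co H_1(M-K;\zz)\to\zz$ with $\lk(\mu,K)=\pm1$. Reducing mod $2$ yields a surjection $\phi\co H_1(M-K;\zz)\to\zz/2\zz$ with $\phi(\mu)=1$; the associated connected double cover of $M-K$, capped off over $K$ by the standard branched model on a tubular neighborhood, gives a connected branched cover $\widetilde{M}$. For uniqueness, note that any two such homomorphisms differ by a homomorphism vanishing on $\mu$, hence one that factors through $H_1(M-K;\zz)/\langle\mu\rangle$. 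A Mayer--Vietoris computation for the decomposition $M=(M-N(K))\cup_{T^2}N(K)$, in which $\mu$ bounds a disk in $N(K)$, identifies $H_1(M-K;\zz)/\langle\mu\rangle$ with $H_1(M;\zz)$. Because $\zz_{(2)}$ is a flat $\zz$--module in which every odd prime is invertible, the hypothesis $H_1(M;\zz_{(2)})=0$ forces $H_1(M;\zz)$ to be finite of odd order, so it admits no nontrivial homomorphism to $\zz/2\zz$. Hence the two homomorphisms coincide, and $\widetilde{M}$ is unique.

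For the last assertion, let $G\subset M-F$ be a surface. Away from $K$ the projection $p\co\widetilde{M}-p^{-1}(K)\to M-K$ is an honest covering map, so its restriction over $G$ is a double cover of $G$ classified by $\phi$ composed with $H_1(G)\to H_1(M-K;\zz)$. But every loop in $G$ is disjoint from $F$, hence has intersection number $0$ with $F$; thus $\phi$ vanishes on the image of $H_1(G)$, the restricted cover has trivial monodromy, and $p^{-1}(G)$ is the disjoint union of two copies of $G$, each carried homeomorphically onto $G$ by $p$.

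I expect the only step requiring genuine care --- everything else being routine bookkeeping with branched covers and pullbacks of covering spaces --- to be the uniqueness argument: getting the identification $H_1(M-K;\zz)/\langle\mu\rangle\cong H_1(M;\zz)$ right, and correctly extracting from $H_1(M;\zz_{(2)})=0$ the fact that $H_1(M;\zz)$ has no $\zz/2\zz$--quotient. This is precisely where localization at $2$, rather than mod-$2$ reduction, is what the argument needs.
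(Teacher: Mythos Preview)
Your proof is correct and follows essentially the same route as the paper's. Both arguments use a Mayer--Vietoris computation together with the hypothesis $H_1(M;\zz_{(2)})=0$ to pin down the homomorphism to $\zz/2\zz$: the paper phrases this as showing $H_1(M-K;\zz_{(2)})\cong\zz_{(2)}$ and then reading off the integral structure, while you equivalently compute $H_1(M-K;\zz)/\langle\mu\rangle\cong H_1(M;\zz)$ and use its odd order; for the lifting of $G$, the paper invokes the cut-and-paste model along $F$, which is exactly your monodromy statement that loops in $G$ have trivial intersection with $F$.
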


\begin{proof}  A Mayer-Vietoris argument implies that $H_1(M-K,\zz_{(2)}) = \zz_{(2)}$.  (See the proof of Lemma~\ref{lem1} for more details.) Since $H_1(M-K, \zz) = \zz^k \oplus T_{odd} \oplus T_{even}$ where  $T_{odd} \oplus T_{even}$ are the odd and even torsion subgroups, by the universal coefficient theorem, $k = 1$ and $T_{even} = 0$. Thus, there is a unique map from $H_1(M-K,\zz)$ onto $\zz/ 2\zz$ and this determines the unique connected 2--fold cover.   

As is true in $S^3$, the branched cover  can be constructed by cutting $M$ open along the bounding surface $F$ for $K$, doubling the resulting space, and gluing the two spaces together along copies of $F$.  Since $F$ misses $G$, it follows that $G$ lifts homeomorphically to the cover.  For details in the $S^3$ case, we refer the reader to~\cite{rolf}.

\end{proof}

\section{The boundary genus of iterated Bing doubles}\label{s3section}

In Section~\ref{sectionbackground}, we saw that every iterated Bing doubles bounds a torus together with a collection of disks in $B^4$.  In this section, we see that if the surfaces are required to live in $S^3$, the situation is not so simple.

In 2006, Cimasoni~\cite{cim} gave a constructive proof of the fact that Bing doubles are boundary links (that is, the components of Bing doubles bound disjoint surfaces in $S^3$).  Given a knot $K$, the surfaces which his proof produces are each of genus $2\cdot g_3(K)$, where $g_3(K)$ denotes the 3--genus of $K$.  Cimasoni's construction naturally extends to iterated Bing doubles to show that for any knot $K$, the components of $B_n(K)$ bound disjoint surfaces in $S^3$: $F_1, \ldots, F_{2^n}$, where genus($F_i) = 2^n g_3(K)$ for all $i$.   In this section, we apply the two covering theorems from the previous section to show that these constructed boundary surfaces are minimal.  

\begin{theorem}\label{cor3} Let $K$ be a knot in $S^3$.   If $B_n(K) = (K_1, \ldots, K_{2^n}) = \partial (F_1, \ldots, F_{2^n})$ where $F_i\subset S^3$, then genus($F_i) \ge 2^n g_3(K)$ for all $i$.
\end{theorem}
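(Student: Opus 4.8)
The plan is to induct on $n$, with the base case $n=0$ being the statement that $g_3(K) \ge g_3(K)$, which is trivial. For the inductive step, I want to relate $B_n(K)$ to $B_{n-1}(K\#K^r)$ via Covering Theorem A and then push a bounding surface down to the branched cover. Concretely, suppose $B_n(K) = \partial(F_1,\dots,F_{2^n})$ with $F_i \subset S^3$. I would apply Covering Theorem A using the marked node that is the parent-side neighbor of $K_i$ (for whichever $i$ I am bounding): let $K_0$ be a component adjacent to $K_i$ corresponding to a depth-one node, and form the $2$-fold branched cover $\widetilde{S^3}$ of $S^3$ over $K_0$. Since $S^3$ has $H_1 = 0$, Lemma~\ref{coverlemma} applies (with $F = F_0$ a Seifert surface for $K_0$, which we may take disjoint from the other $F_j$ after an isotopy — here is where I need to be a little careful, since the given $F_j$ need not be disjoint from a chosen Seifert surface for $K_0$; but I can instead choose $F_0$ to be a Seifert surface for $K_0$ in the complement of the sublink inside $S$, and note all the $F_j$ for $j$ in the sublink lie in that complement). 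Then the sublink of the lift inside $\widetilde{S}$ is $B_{T_-}(K\#K^r)$, which is $B_{n-1}(K\#K^r)$, and each $F_j$ with $K_j$ in that sublink lifts homeomorphically to a surface $\widetilde{F_j} \subset \widetilde{S} \subset \widetilde{S^3} \cong S^3$ of the same genus. So $B_{n-1}(K\#K^r)$ bounds disjoint surfaces in $S^3$ of genera $\mathrm{genus}(F_j)$.

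By the inductive hypothesis applied to the knot $K\#K^r$, each such surface has genus $\ge 2^{n-1} g_3(K\#K^r)$. Now I invoke additivity of the $3$-genus under connected sum, $g_3(K\#K^r) = 2g_3(K)$, to conclude $\mathrm{genus}(F_j) \ge 2^{n-1}\cdot 2 g_3(K) = 2^n g_3(K)$ for every $j$ whose component lies in the sublink $\widetilde{S}$. The remaining point is that every component $K_i$ of $B_n(K)$ can be arranged to lie in such a sublink: for any target $i$, one chooses the marked depth-one node so that $K_i$ is among the components inside $S$ (equivalently, $K_i$ descends from one of the two children of the depth-one node other than $K_0$), which is always possible when $n \ge 1$ because the tree for $B_n(K)$ is the full binary tree of depth $n$ and has the required structure. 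Thus $\mathrm{genus}(F_i) \ge 2^n g_3(K)$ for all $i$.

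The main obstacle I anticipate is the disjointness/transversality bookkeeping in the branched cover step: I need the chosen bounding surface $F_0$ for the branch component $K_0$ to be disjoint from the surfaces $F_j$ that I want to lift, so that Lemma~\ref{coverlemma} gives clean homeomorphic lifts. Since the hypothesis only supplies disjoint surfaces $F_1,\dots,F_{2^n}$ for the \emph{components} of $B_n(K)$ and says nothing about a Seifert surface for $K_0$, I would handle this by exploiting the product structure in Figure~\ref{base}: the components I am lifting all sit inside the solid torus $\widetilde S$ (downstairs, inside $S$), while $K_0$ sits outside $S$, so I can choose $F_0$ to be a Seifert surface for $K_0$ lying in the complement of $S$ — and the surfaces $F_j$ can be isotoped (rel boundary) to lie inside $S$ since their boundary components do, using that $S^3 \setminus S$ is a solid torus in which $B_{T_-}(K\# K^r)$ is null-homologous only after the right framing check. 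If that last isotopy is not automatic, the fallback is to do a general-position argument and tube $F_0$ past the intersections, or simply observe (as in the paper's alternate proof of the genus-one theorem in Section~\ref{sectionbackground}) that one may re-choose the $F_j$'s to miss $F_0$ without changing their genera, since what matters for the bound is the existence of \emph{some} disjoint bounding surfaces for the lifted link. A secondary, minor obstacle is confirming additivity of $g_3$ under connected sum with the reverse — this is classical (Schubert), and the hypothesis $\nu(K) = \nu(K^r)$ in the paper's setup is the analog we would cite in the signature/$\tau$ version, but here for $g_3$ it is just $g_3(K\#K^r)=g_3(K)+g_3(K^r)=2g_3(K)$.
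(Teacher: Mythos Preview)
Your inductive framework is natural, but the key step does not go through as written: Covering Theorem~A requires the tree $T$ to have a \emph{leaf} at depth~1, since the ``marked node'' is a component of the link and components correspond to leaves. For $n\ge 2$ the tree for $B_n(K)$ is the full binary tree of depth~$n$, all of whose leaves sit at depth~$n$; there is no component $K_0$ ``corresponding to a depth-one node.'' So you cannot pass directly from $B_n(K)$ to $B_{n-1}(K\#K^r)$ by a single application of Covering Theorem~A, and your induction stalls at $n=2$.

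This is exactly why the paper invokes Covering Theorem~B as well. Before each application of~A, one uses~B repeatedly (branching over components on the side of the tree not containing $K_i$) to prune that half down to a single leaf at depth~1; only then does~A apply and replace $K$ by $K\#K^r$. In the $B_3$ example this accounts for the four applications of~B preceding the three applications of~A. Your inductive scheme can be repaired the same way: apply~B enough times to collapse the half of the tree not containing $K_i$ to a depth-one leaf, then apply~A once to land on $B_{n-1}(K\#K^r)$ with the lifted surfaces, and invoke the inductive hypothesis.

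Once you branch over an actual component of the link (or a lift thereof), your disjointness worry evaporates: the branch component already comes equipped with one of the given disjoint surfaces $F_j$ (or a homeomorphic lift), so Lemma~\ref{coverlemma} applies immediately with $F=F_j$ and no isotopy or tubing is needed. Your attempt to manufacture a separate Seifert surface for $K_0$ in the complement of $S$, and to isotope the other $F_j$ into $S$, is both unnecessary and unjustified (there is no reason the given $F_j$ can be pushed into the solid torus $S$); it is a symptom of the missing depth-one leaf rather than a genuine obstacle.
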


\begin{proof} All of the components of $B_n(K)$ are unknotted, so the branched cover of $S^3$ branched over any component of $B_n(K)$ will again be $S^3$.  Moreover, the lifts of $B_n(K)$ in the cover are again unknotted.  Hence, the process of taking covers and lifts can be iterated.  We illustrate the proof in the case of $B_3(K)$, drawn schematically in the diagram on the left in Figure~\ref{b3proof}.  Applying Covering Theorem B four times yields the link illustrated in the second diagram.  Notice that the resulting tree now has a node of depth one.  Applying Covering Theorem A yields the third diagram.  Two more applications of Covering Theorem A yield the fourth and fifth diagrams.  This last diagram represents the knot $4(K \# K^r)\subset S^3$.  

Let $K_1$ denote the component of $B_3(K)$ corresponding to the leftmost node in the tree diagram for the link.  From our previous discussion and Lemma~\ref{coverlemma} concerning bounding surfaces and covers, it follows that the boundary surface $F_1$ for $K_1$ lifts in each cover to bound the component of the link corresponding (again) to the leftmost node in the tree diagram for the covering link.  Continuing to lift from one cover to the next, the surface $F_1$ finally lifts to be a Seifert surface for $ 4(K \# K^r)\subset S^3$.  It follows that genus($F_1) \ge  g_3(4(K \# K^r)) = 8 g_3(K)$.  By modifying the chain of branched covers, one can show that a parallel statement holds for $F_i$, where $i>1$.   
\end{proof}

\begin{figure}[h]
\psfrag{K}{ J}
\begin{center}
\psfrag{A}{\!\!\!\!\!\!\!\!\!\!\!\!$4(K \# K^r)$}
\psfrag{B}{\!\!\!\!\!\!\!\!\!\!\!\!$2(K \# K^r)$}
\psfrag{C}{\!\!\!\!\!\!\!\!\!$ K \# K^r $}
\psfrag{D}{\!\!$ K  $}
\includegraphics[scale=.2]{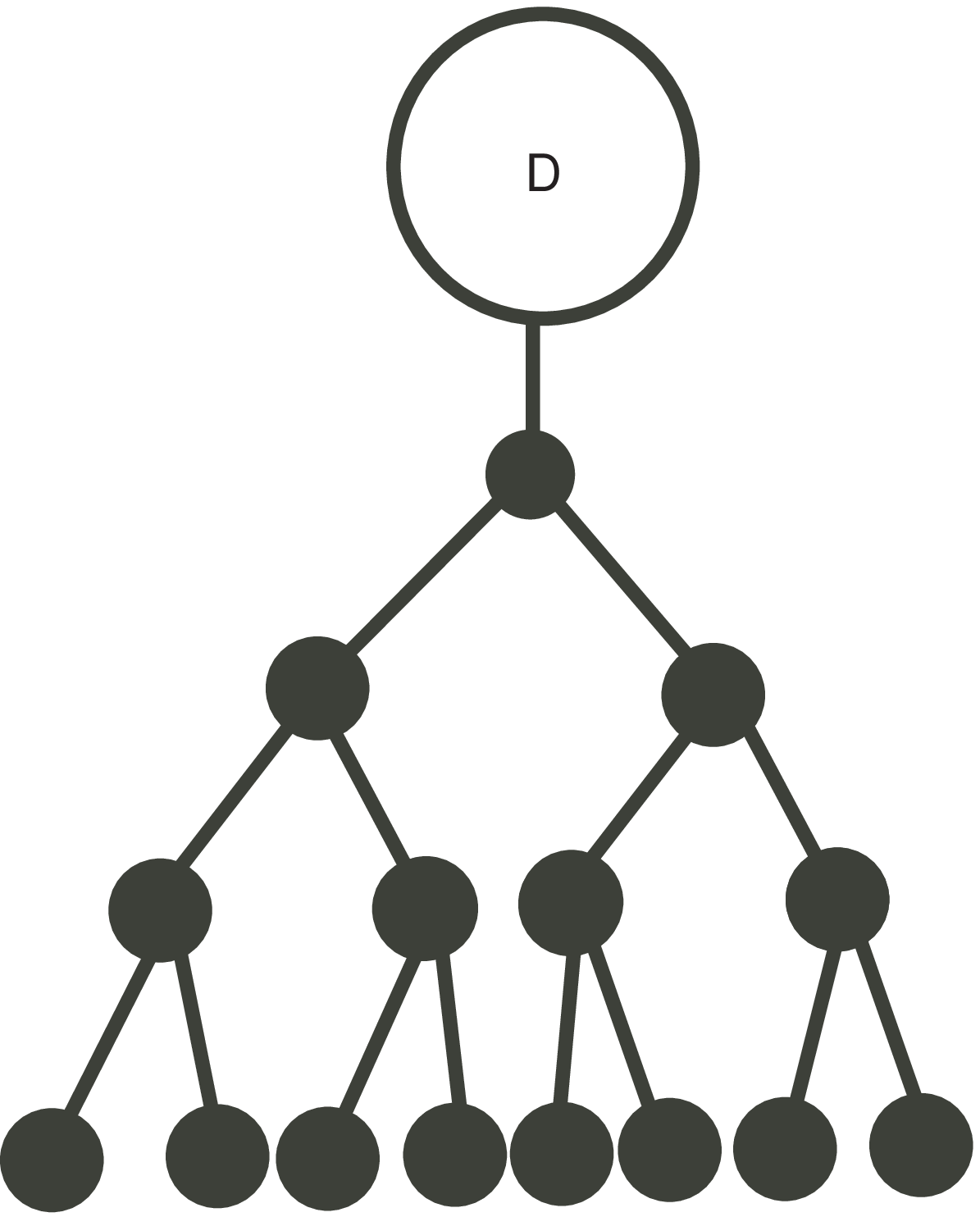} \hskip.1in \includegraphics[scale=.2]{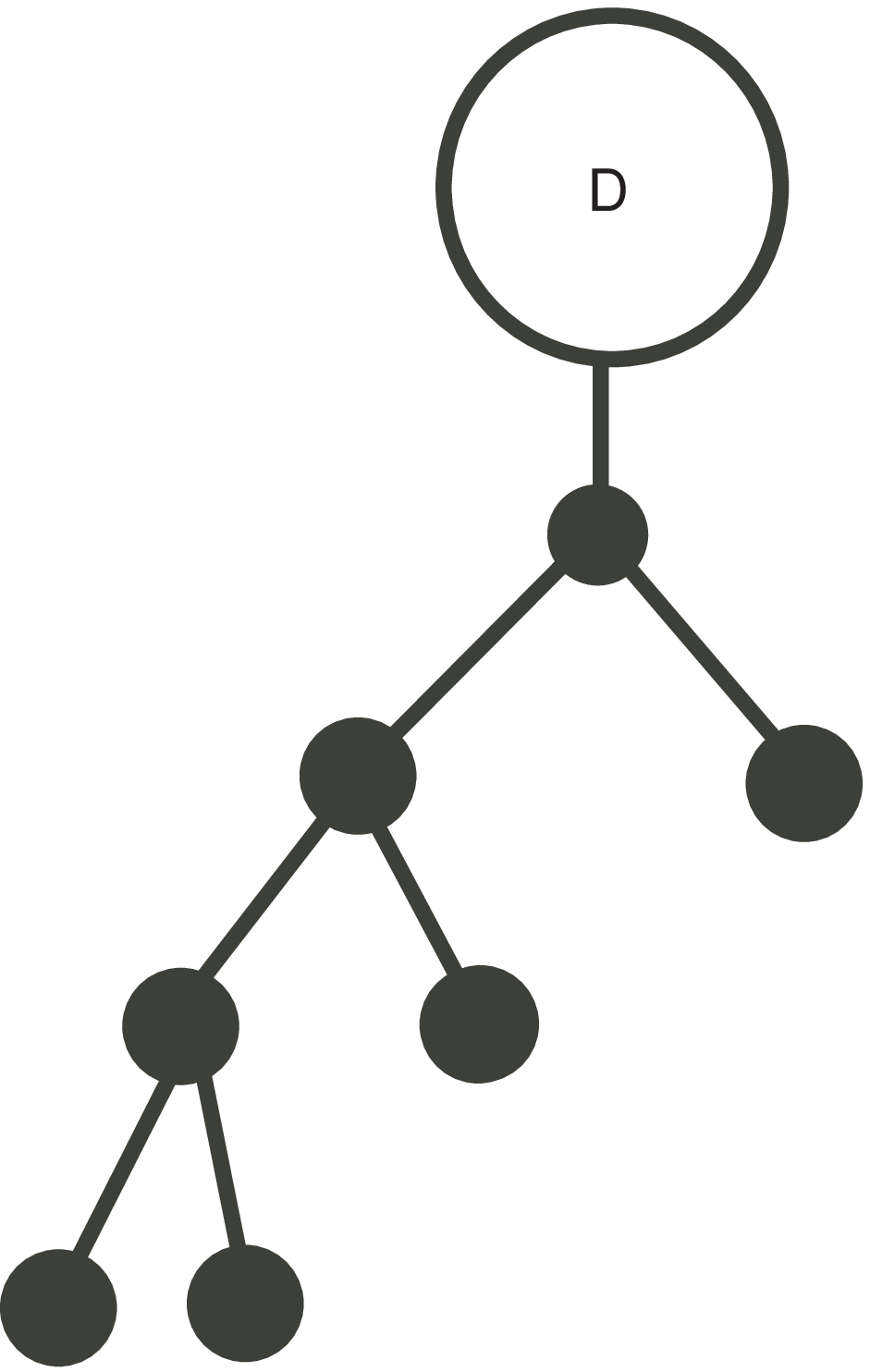}  \hskip.1in \includegraphics[scale=.2]{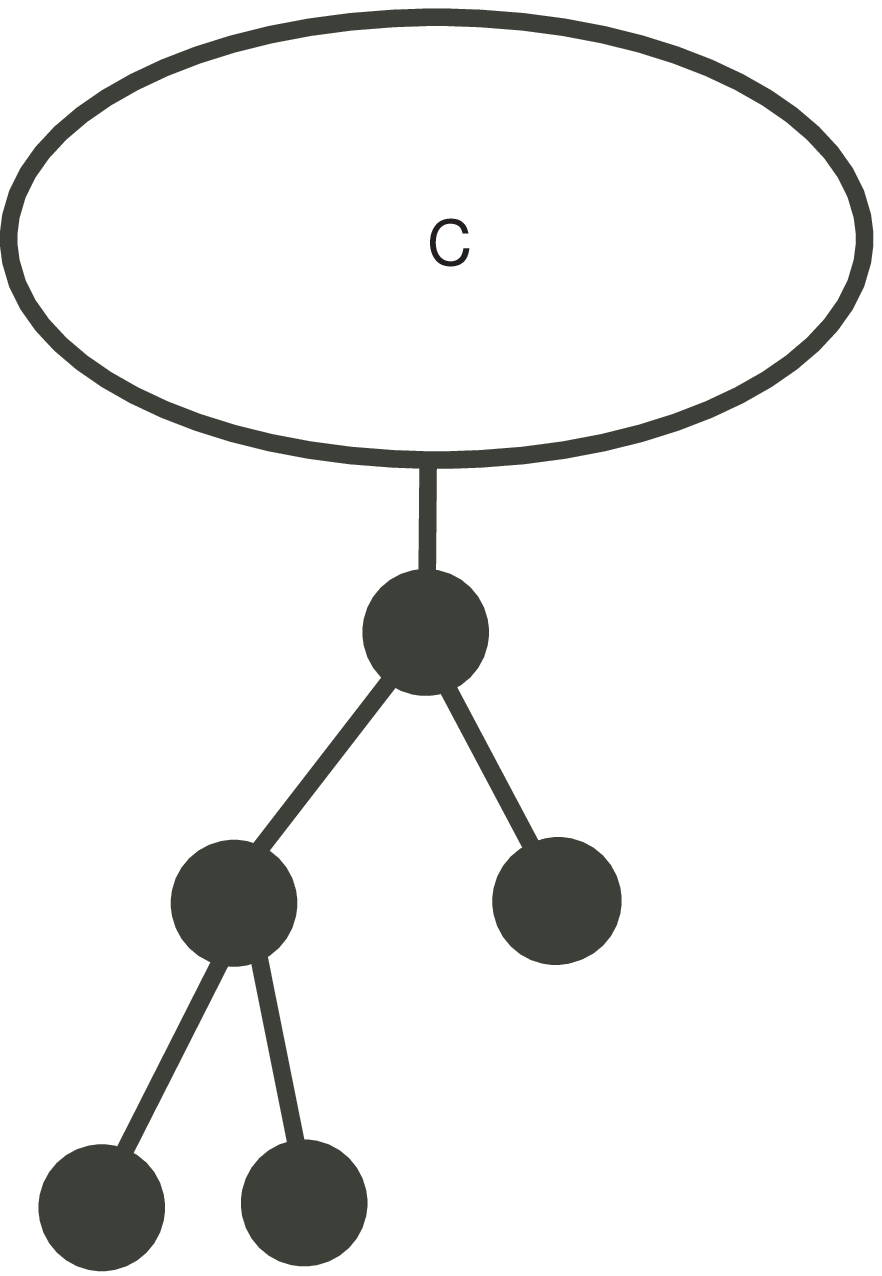}  \hskip.1in \includegraphics[scale=.2]{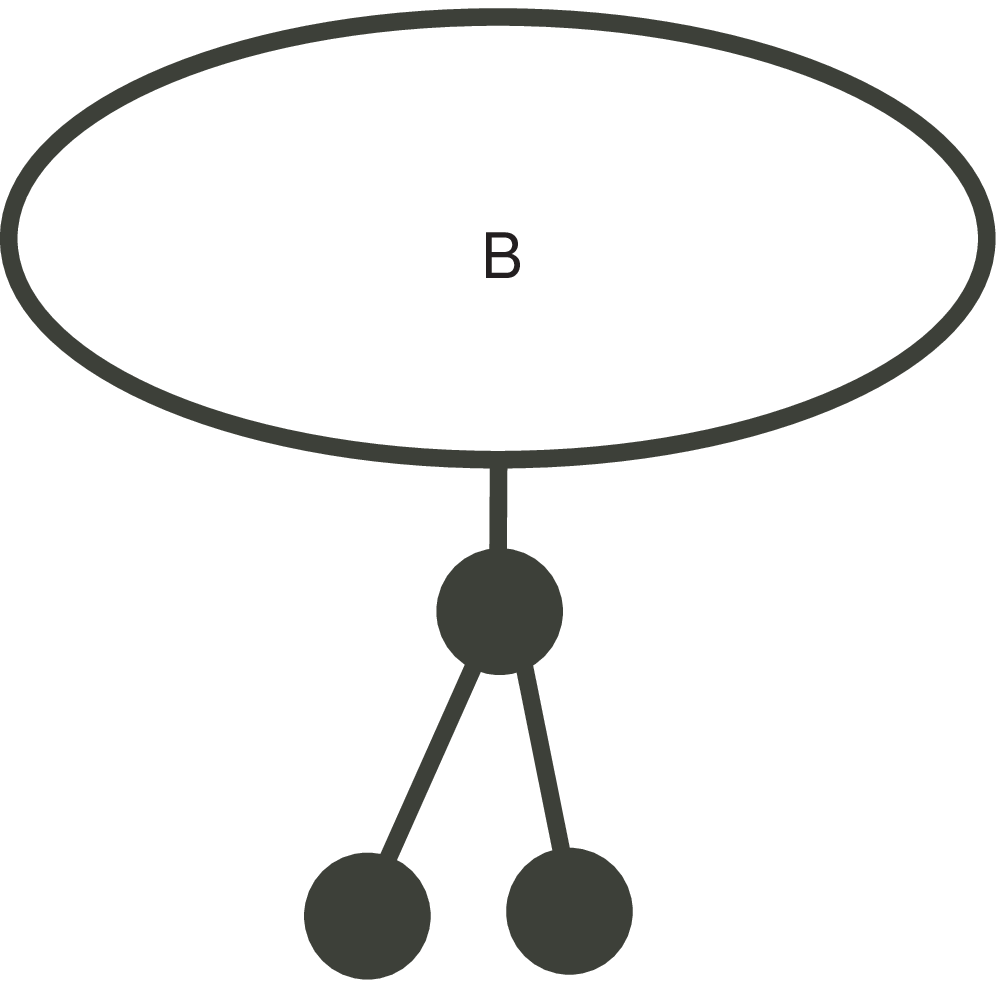}   \hskip.1in \includegraphics[scale=.2]{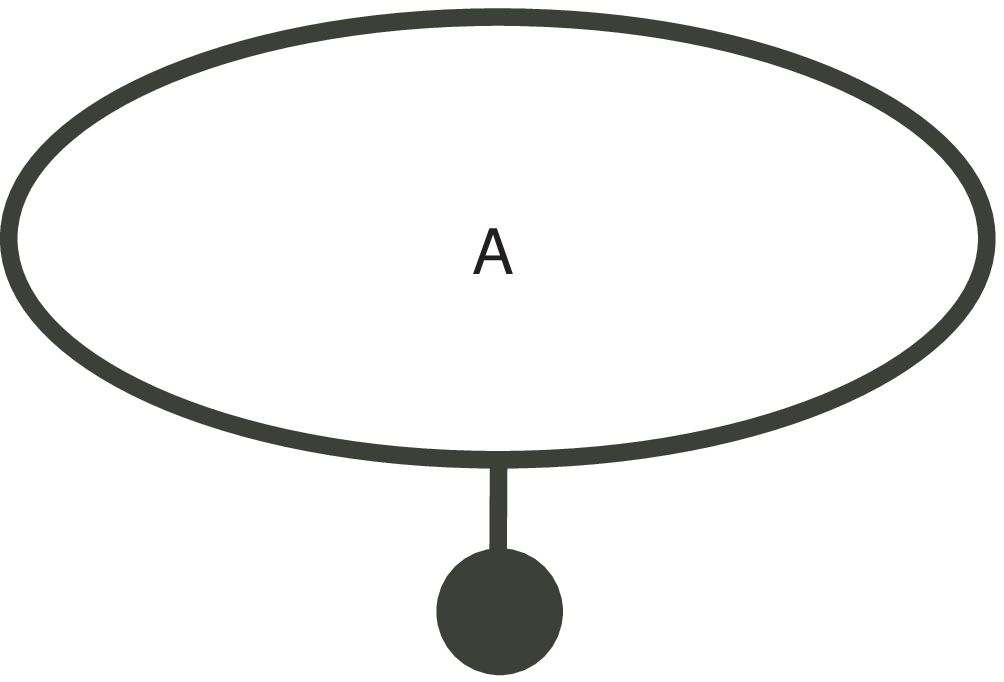}   \end{center}
\caption{A tree representing $B_3(K)$ (far left), and a sequence of trees representing covering links of $B_3(K)$.}\label{b3proof}
\end{figure}

\section{Concordances}~\label{concordances}

We now prove Theorem~\ref{mainthm}, which addresses the boundary genus of links which are concordant to $B_n(K)$.  As stated in the introduction, we assume that $\nu$ is an additive knot invariant for which $\text{genus}(F) \ge |\nu(K)|$ for all $K$ and for all pairs $(W,F)$ where $W$ is a $\zz_{(2)}$--homology ball  and $\partial(W,F) = (S^3, K)$.  We further assume that $\nu(K)=\nu(K^r)$ for all $K$.  Examples of such invariants include half  the Murasugi signature, $\sigma/2$,  the Levine-Tristram $p$--signatures    $\frac{1}{2}\sigma_{i/p}(K)$,  and the 
Ozsv\'ath-Szab\'o $\tau$ invariant.    (See the appendix for details.)
 
The proof of Theorem~\ref{mainthm} will rely on the process of finding a series of branched covers, similar to Theorem~\ref{cor3}.  However, this time the base space is $S^3\times I$, and the branch set will be a component of the concordance from $B_n(K)$ to the link $J$ (so the branch set is a properly embedded $S^1\times I$).  Unlike before, the cover will not necessarily again be homeomorphic to $S^3\times I$.  However, the cover {\em will} have some useful properties.  First, both of the boundary components of the cover will be $\zz_{(2)}$--homology spheres (in particular, one boundary component will be $S^3$, since the branched cover on one end will be branched over a component of $B_n(K)$, which is an unknot).  Moreover, the cover as a whole will have the same homology as a $\zz_{(2)}$--homology sphere.  In addition, the cover will itself admit a 2--fold branched cover, branched over a component of the lift of the concordance.  In Lemma~\ref{lem1}, we verify that the necessary requirements are satisfied for this last statement to hold.

Let $W$   be a compact 4--manifold satisfying $\partial W = M_1 \amalg -M_2$, where $M_1$ is a $\zz_{(2)}$--homology sphere and the inclusion of $M_1$ into $W$ induces an isomorphism on homology.  (With only these assumptions, we will show that $M_2$ is also a $\zz_{(2)}$--homology sphere.)  Let $S$ be a properly embedded $S^1 \times I$   in $W$ with boundary $\partial(W, S) =  (M_1, K_1) \amalg -(M_2, K_2)$.
 
 \begin{lemma}\label{lem1} The inclusion maps  $H_*(M_i - K_i,\zz_{(2)}) \to H_*(W-S, \zz_{(2)})$ are isomorphisms and $H_1(M_1 - K_1, \zz_{(2)}) \cong \zz_{(2)}$.  
 \end{lemma}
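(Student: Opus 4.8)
The plan is to work throughout with $\zz_{(2)}$ coefficients, suppressed from the notation, using that $\zz_{(2)}$ is a PID, so that finitely generated modules over it decompose as a free part plus a sum of cyclic $2$-power torsion modules; all the spaces in sight ($W-S$, $M_i-K_i$) are homotopy equivalent to compact manifolds, so their homology is finitely generated. Note that $W$ is oriented (the notation $\partial W = M_1 \amalg -M_2$ reflects this) and, since $H_0(W) \cong H_0(M_1) \cong \zz_{(2)}$, connected. The key computational input will be a pair of Thom-isomorphism identifications. A knot in an oriented $3$-manifold has trivial normal bundle, and the properly embedded annulus $S \cong S^1 \times I$ has trivial, hence oriented, normal bundle in $W$ (it is an orientable surface in an oriented $4$-manifold, with $S \simeq S^1$, and oriented $2$-plane bundles over $S^1$ are trivial). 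Choosing compatible tubular neighborhoods, so that the neighborhood of $S$ meets $M_i$ in a tubular neighborhood of $K_i$, excision and the Thom isomorphism give natural identifications $H_*(M_i, M_i-K_i) \cong H_{*-2}(K_i)$ and $H_*(W, W-S) \cong H_{*-2}(S)$, compatible with the inclusion $(M_i, M_i-K_i) \hookrightarrow (W, W-S)$; under these the induced map becomes the map $H_{*-2}(K_i) \to H_{*-2}(S)$ induced by the inclusion of $K_i$ as an end of the annulus, which is a homotopy equivalence. Hence $H_*(M_i, M_i-K_i) \to H_*(W, W-S)$ is an isomorphism for $i=1,2$.

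From here the argument for $M_1$ is formal. Feeding $H_2(M_1, M_1-K_1) \cong H_0(K_1) \cong \zz_{(2)}$ into the long exact sequence of the pair $(M_1, M_1-K_1)$ and using $H_1(M_1) = H_2(M_1) = 0$ (as $M_1$ is a $\zz_{(2)}$-homology sphere), the segment $0 = H_2(M_1) \to H_2(M_1, M_1-K_1) \to H_1(M_1-K_1) \to H_1(M_1) = 0$ gives $H_1(M_1-K_1) \cong \zz_{(2)}$. Next, comparing the long exact sequences of the pairs $(M_1, M_1-K_1)$ and $(W, W-S)$: the maps $H_*(M_1) \to H_*(W)$ are isomorphisms by hypothesis and the maps $H_*(M_1, M_1-K_1) \to H_*(W, W-S)$ are isomorphisms by the previous paragraph, so the five lemma applied to the resulting ladder forces $H_*(M_1-K_1) \to H_*(W-S)$ to be an isomorphism.

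It remains to treat $M_2$, for which the only missing ingredients are that $M_2$ is again a $\zz_{(2)}$-homology sphere and that $H_*(M_2) \to H_*(W)$ is an isomorphism; granting these, the argument of the previous paragraph applies verbatim with $M_2$ in place of $M_1$ (the Thom-isomorphism step above was already stated for both $i$). To establish them, note the hypothesis gives $H_*(W, M_1) = 0$; by Lefschetz duality for the oriented cobordism $W$ with $\partial W = M_1 \amalg -M_2$ one has $H_k(W, M_1) \cong H^{4-k}(W, M_2)$, so $H^*(W, M_2) = 0$. The universal coefficient theorem over the PID $\zz_{(2)}$ then forces each $H_k(W, M_2)$ to be simultaneously torsion-free (vanishing Ext) and torsion (vanishing Hom), hence zero. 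Thus $H_*(M_2) \to H_*(W)$ is an isomorphism, and since $H_*(W) \cong H_*(M_1) \cong H_*(S^3;\zz_{(2)})$, $M_2$ is a $\zz_{(2)}$-homology sphere.

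I expect the exact-sequence chases to be routine. The two steps that require genuine care are: the Thom-isomorphism computation together with its naturality — in particular arranging the tubular neighborhoods of $S$, $K_1$, $K_2$ compatibly so that the comparison map really is the one induced by the end inclusions $K_i \hookrightarrow S$ — and, as the one non-obvious ingredient, the duality-plus-universal-coefficients argument by which the one-sided hypothesis on $M_1$ propagates to show that $M_2$ is also a $\zz_{(2)}$-homology sphere.
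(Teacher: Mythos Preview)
Your proof is correct and complete. The overall architecture matches the paper's: both first reduce the $M_2$ case to the $M_1$ case via the Lefschetz duality plus universal coefficients argument (your version of this step is essentially identical to the paper's), and both then run a five-lemma comparison between an exact sequence for $M_i$ and the corresponding one for $W$.

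The genuine difference is in which exact sequence is used. The paper decomposes $M_i = X_i \cup Y_i$ and $W = X \cup Y$ into a closed tubular neighborhood of the branch set and its complement, and compares the two Mayer--Vietoris sequences; the computation $H_1(M_1-K_1)\cong\zz_{(2)}$ then comes from the Mayer--Vietoris segment $0 \to H_1(S^1\times S^1)\to H_1(X_1)\oplus H_1(Y_1)\to 0$. You instead compare the long exact sequences of the pairs $(M_i,M_i-K_i)$ and $(W,W-S)$, identifying the relative terms via the Thom isomorphism for the trivial normal bundles and observing that $K_i\hookrightarrow S$ is a homotopy equivalence. Your route is a little cleaner conceptually: the Thom isomorphism packages the tubular-neighborhood excision in one stroke, and the naturality statement you need is exactly the functoriality of the Thom class under restriction to the ends of the annulus. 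The paper's Mayer--Vietoris argument is more elementary in that it avoids invoking the Thom isomorphism by name, at the cost of carrying around an extra summand in the five-lemma step. Either way the content is the same excision-plus-tubular-neighborhood computation, just organized differently.
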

 \begin{proof}

Since  $H_*(M_1, \zz_{(2)}) \to H_*(W, \zz_{(2)})$ is an isomorphism, we conclude from the long exact sequence that $H_*(W,  M_1, \zz_{(2)})  = 0$.  Lefschetz duality then implies that $H^*(W,  M_2, \zz_{(2)})  = 0$.  Universal coefficients implies  $H_*(W,  M_2, \zz_{(2)})  = 0$.  Finally, we have from the long exact sequence that $H_*(M_2, \zz_{(2)}) \to H_*(W, \zz_{(2)})$ is an isomorphism.  Hence, $M_2$ is a $\zz_{(2)}$--homology sphere, as well.

Let $X$ be a compact tubular neighborhood of $S$, and let $Y$ be the closure of its complement. Write $ X_1 = X \cap M_1$ and $Y_1= Y\cap M_1$.  Note that $ X \cap Y \cong (S^1 \times S^1) \times I$ and $X_1 \cap Y_1 \cong S^1 \times S^1$.  The inclusion map from $M_1$ into $W$ induces a map from  the Mayer-Vietoris sequence (with $\zz_{(2)}$--coefficients) associated to the decomposition $M_1 = X_1 \cup Y_1$ to the sequence associated to $W = X \cup Y$. The inclusion of $X_1$ into $X$ is a homology equivalence, as is the map from  $X_1 \cap Y_1$ into $X \cap Y$.  The inclusion of $M_1$ into $W$ is assumed to be an isomorphism on homology.  Thus, it follows from the five lemma that the inclusion of $H_*(Y_1, \zz_{(2)})$ into $H_*(Y,\zz_{(2)})$ is an isomorphism.  A similar argument applies for $M_2$.

To prove that $H_1(M_1 - K_1, \zz_{(2)}) = \zz_{(2)}$, we show that $H_1(Y_1,\zz_{(2)}) =  \zz_{(2)}$.  Consider the Mayer-Vietoris sequence $$H_2(M_1 ,  \zz_{(2)}) \to H_1(S^1\times S^1 , \zz_{(2)}) \to H_1(X_1,  \zz_{(2)}) \oplus H_1(Y_1,  \zz_{(2)}) \to H_1(M_1,  \zz_{(2)}).$$  This simplifies to
$$0 \to  \zz_{(2)} \oplus   \zz_{(2)} \to   \zz_{(2)} \oplus H_1(Y_1,  \zz_{(2)}) \to 0.$$ We now see that $H_1(Y_1,\zz_{(2)}) =  \zz_{(2)}$, as desired.

\end{proof}
 
 From Lemma~\ref{lem1}, we can conclude that there is a natural 2--fold branched cover of $W$ branched over $S$, restricting to the natural branched covers of $M_i$ branched over $K_i$.  Denote these covers by $\widetilde{W},  \widetilde{M_1}$, and $\widetilde{M_2}$.  The next result verifies that these covers have the same homological properties as their corresponding spaces downstairs.
 
 \begin{lemma}\label{lem2}
  The cover $\widetilde{M}_1$ is a $\zz_{(2)}$--homology sphere and  the inclusion homomorphism $H_*(\widetilde{M}_i,\zz_{(2)}) \to H_*(\widetilde{W},\zz_{(2)})$ is an isomorphism.

 \end{lemma}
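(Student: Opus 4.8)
The plan is to transfer the homological statements of Lemma~\ref{lem1} upstairs through the branched cover, using the standard Mayer--Vietoris decomposition of a branched cover into the unbranched cover of the exterior glued to a tubular neighborhood of the branch set. First I would set up notation parallel to the proof of Lemma~\ref{lem1}: write $\widetilde{W}$ as the union of $\widetilde{Y}$, the connected double cover of $Y = W - \operatorname{int}(X)$ determined by the $\zz_{(2)}$--homology class guaranteed by Lemma~\ref{lem1}, together with a copy of $X \cong (S^1 \times I) \times D^2$ (which is its own branched ``cover'' in the trivial fibered-neighborhood sense), glued along the double cover of $X \cap Y \cong (S^1 \times S^1) \times I$. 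The restriction to $M_1$ decomposes $\widetilde{M}_1 = \widetilde{Y}_1 \cup X_1$ along the double cover of $X_1 \cap Y_1 \cong S^1 \times S^1$, and similarly for $\widetilde{M}_2$.

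The key computation is to determine $H_*(\widetilde{Y}_1, \zz_{(2)})$ and $H_*(\widetilde{Y}, \zz_{(2)})$. Since $H_1(Y_1, \zz_{(2)}) \cong \zz_{(2)}$ (from Lemma~\ref{lem1}), the double cover $\widetilde{Y}_1 \to Y_1$ corresponds to the unique surjection $H_1(Y_1, \zz_{(2)}) \to \zz/2\zz$; because $\zz_{(2)}$ has no $2$--torsion, a transfer argument shows that $H_*(\widetilde{Y}_1, \zz_{(2)})$ has the same $\zz_{(2)}$--rank as $H_*(Y_1,\zz_{(2)})$, and in fact the map on $H_1$ into the base is multiplication by $2$ (hence an isomorphism with $\zz_{(2)}$--coefficients), so $H_1(\widetilde{Y}_1, \zz_{(2)}) \cong \zz_{(2)}$ and higher homology vanishes as it did downstairs. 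The same reasoning, applied to $\widetilde{Y} \to Y$ (using that $H_*(Y_1,\zz_{(2)}) \to H_*(Y,\zz_{(2)})$ is an isomorphism, hence the covers are compatible and $H_*(\widetilde{Y}_1,\zz_{(2)}) \to H_*(\widetilde{Y},\zz_{(2)})$ is an isomorphism by the five lemma applied to the covering Mayer--Vietoris sequences). Feeding $H_1(\widetilde{Y}_1,\zz_{(2)}) \cong \zz_{(2)}$ back into the Mayer--Vietoris sequence for $\widetilde{M}_1 = X_1 \cup_{S^1\times S^1} \widetilde{Y}_1$ — exactly as in the last display of the proof of Lemma~\ref{lem1}, with the circle $X_1\cap Y_1$ now replaced by its double cover, which is still $S^1 \times S^1$ — gives $H_*(\widetilde{M}_1, \zz_{(2)}) = H_*(S^3,\zz_{(2)})$, so $\widetilde{M}_1$ is a $\zz_{(2)}$--homology sphere. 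Finally, the five lemma applied to the map from the Mayer--Vietoris sequence of $\widetilde{M}_i$ to that of $\widetilde{W}$, using the isomorphism $H_*(\widetilde{Y}_i,\zz_{(2)})\to H_*(\widetilde{Y},\zz_{(2)})$ and the homology equivalences on the $X$--pieces, yields that $H_*(\widetilde{M}_i,\zz_{(2)}) \to H_*(\widetilde{W},\zz_{(2)})$ is an isomorphism.

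The main obstacle I anticipate is the transfer step: justifying cleanly that a connected double cover $\widetilde{Z} \to Z$ with $\zz_{(2)}$--coefficients preserves homology ranks and that the push-forward on $H_1$ is multiplication by two, and carrying this uniformly through all the $Y$, $Y_1$, $X\cap Y$ pieces so the five-lemma diagrams actually commute. This is where the hypothesis that we work with $\zz_{(2)}$ rather than $\zz/2\zz$ or $\zz$ is essential — over $\zz_{(2)}$ the transfer map $\tau$ satisfies $\pi_* \tau = 2 \cdot \operatorname{id}$ and $\tau \pi_* = 2 \cdot \operatorname{id}$ (on the invariant part), and $2$ is a unit, so $\pi_*$ is split injective and, combined with the rank count, an isomorphism onto its image, which is all that is needed. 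Everything else is a formal diagram chase identical in shape to Lemma~\ref{lem1}; I would remark that the branch set misses the relevant auxiliary surfaces so no further subtlety arises when this lemma is later applied to track the genus of the lifted surfaces.
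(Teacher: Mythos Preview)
Your transfer argument contains a fatal error: you assert that ``$2$ is a unit'' in $\zz_{(2)}$, but $\zz_{(2)}$ is the localization of $\zz$ at the prime $2$ (rationals with odd denominator), so $2$ is precisely the non-invertible prime.  The identities $\pi_*\tau = 2\cdot\mathrm{id}$ and $\tau\pi_* = 2\cdot\mathrm{id}$ therefore do \emph{not} let you conclude that $\pi_*$ is a split injection or that ranks are preserved.  Indeed, the whole difficulty of the lemma is that one is taking a $2$--fold cover while working over a ring in which $2$ is not inverted; if $2$ were a unit the statement would be trivial, and the paper's careful choice of $\zz_{(2)}$ (rather than $\zz[1/2]$ or $\qq$) would be pointless.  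Concretely, for a generic double cover $\widetilde{Z}\to Z$ there is no reason for $H_1(\widetilde{Z};\zz_{(2)})$ to have the same rank as $H_1(Z;\zz_{(2)})$, so your computation of $H_*(\widetilde{Y}_1;\zz_{(2)})$ and $H_*(\widetilde{Y};\zz_{(2)})$ is unsupported.

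The paper sidesteps this entirely.  Instead of a Mayer--Vietoris/transfer comparison, it removes a tubular neighborhood of an arc on $S$ running from $K_1$ to $K_2$, turning $(W,S)$ into a pair $(Z,D)$ where $Z$ is a $\zz_{(2)}$--homology $4$--ball and $D$ is a properly embedded disk.  It then invokes a theorem of Cha (\cite{cha}) asserting that the $2$--fold branched cover of a $\zz_{(2)}$--homology ball over a disk is again a $\zz_{(2)}$--homology ball; this is a genuine input that does not reduce to transfer.  Finally $\widetilde{W}$ is rebuilt from $\widetilde{Z}$ by attaching a single $3$--handle, from which the homology of $\widetilde{W}$ and of $\widetilde{M}_i = \partial\widetilde{Z}$ summands is read off directly, and the inclusion isomorphism is checked by a short duality argument in degree $3$.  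If you want to repair your approach, you would need to replace the transfer step with an honest computation---for instance, via a bounding Seifert surface for $K_1$ in $M_1$ and a Mayer--Vietoris argument on the cut-and-paste description of the branched cover---but carrying this through for the $4$--manifold $\widetilde{Y}$ is essentially what Cha's cited result packages.
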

 \begin{proof}  Let $\alpha$ be an embedded arc on $S$ joining a point on $K_1$ to a point on $K_2$.  Let $(Z, Y) = (W - N(\alpha), S-N(\alpha))$, where $N(\alpha)$ is an open tubular neighborhood of $\alpha$ in $W$.  Since the arc is dual to a generator of $H_3(W)$, a homological argument implies that $Z$ is a $\zz_{(2)}$--homology 4--ball and clearly $Y$ is an embedded disk.  According to~\cite{cha}, the 2--fold branched cover of $Z$ branched over $Y$ is a $\zz_{(2)}$--homology 4--ball, which we denote $\widetilde{Z}$.

 The manifold of interest, $\widetilde{W}$, is recovered from $\widetilde{Z}$ by adding a 3--handle.  Since the boundary of $\widetilde{W}$ is disconnected, the 3--handle is added along a separating 2--sphere in $\partial Z$.  Since $H_3(\widetilde{Z}, \zz_{(2)}) = H_2(\widetilde{Z}, \zz_{(2)})$ = 0, we see that $H_3(\widetilde{W}, \zz_{(2)}) = \zz_{(2)}$.  This is the only changed effected by the handle addition, so $ H_4(\widetilde{W}, \zz_{(2)})  = H_2(\widetilde{W}, \zz_{(2)}) = H_1(\widetilde{W}, \zz_{(2)}) = 0$, as desired.

 Note that $\partial \widetilde{Z} = \widetilde{M_1} \# - \widetilde{M_2}$, and thus each $\widetilde{M_i}$ has homology that is isomorphic to that of $\widetilde{W}$.  The fact that the inclusion is an isomorphism will follow from the fact that the inclusion $H_3(\widetilde{M}_1, \zz) \to H_3( \widetilde{W},\zz)$ is an isomorphism.  Dually we show that $H^3(\widetilde{W}, \zz) \to H^3( \widetilde{M}_1,\zz)$ is an isomorphism. This can be achieved by showing that $H^3(\widetilde{W}, \zz) \to H^3( \widetilde{M}_1,\zz) \oplus  H^3( \widetilde{M}_2,\zz)$ is onto the skew diagonal.  But via duality, this is equivalent to showing that the map
 $H_1(\widetilde{W}, \partial  \widetilde{M}_1 \cup \partial  \widetilde{M}_2,  \zz) \to H_0( \widetilde{M}_1,\zz) \oplus  H_0( \widetilde{M}_2,\zz)$ is onto the skew diagonal, a fact that follows quickly from the long exact sequence.
 \end{proof}

 \begin{proof}[\bf Proof of Theorem~\ref{mainthm}] Let $S$ be a concordance in $S^3\times I$ from the link
 $B_n(K) = (K_1, K_2, \ldots, K_{2^n})$ to  the boundary link $J = (J_1, \ldots , J_{2^n}) = \partial (F_1, \ldots , F_{2^n})$.  We show that genus($F_1) \ge 2^n  |\nu  (K)|$.  The argument is similar for $F_i$ where $i>1$.
 
 As seen in the proof of Theorem~\ref{cor3}, there is a sequence of 2--fold branched covers of $S^3$ such that a connected component of the preimage of $K_1$ in the cover is $2^{n-1}(K \# K^r)$. 
 By Lemmas~\ref{lem1} and~\ref{lem2}  there is a corresponding sequence of 2--fold branched covers of $S^3 \times I$ branched over a component of the concordance $S$ between $B_n(K)$ and the link $J$.  We denote the iterated cover of $S^3 \times I$ by $W$.  (By Lemma~\ref{lem2}, each successive cover has the $\zz_{(2)}$--homology of $S^3$, so the process can be iterated.)  At each stage the relevant components of $S$ also lift  to give concordances in the covering spaces.  
 
 Since $J$ is a boundary link, the surfaces $F_i$ lift trivially in each successive cover, and thus $2^{n-1}(K \# K^r)$ is concordant to the lift of $J_1$,  denoted $\widetilde{J}_1$, which bounds the surface $\widetilde{F}_1 \cong F_1$. Denote the lifted surface forming the concordance by $\widetilde{S}_1$.  Notice that $ \widetilde{J}_1 \subset M$ where $M$ is the result of taking the iterated 2--fold covers over the $J_i$ and their lifts.  There is no reason to expect this manifold $M$ to be $S^3$, since the branch sets (a component of $J$ and lifts of components of $J$) may have been nontrivial knots.

 We have now constructed a 4--manifold $W$ which is a $\zz_{(2)}$--homology 3--sphere having boundary $S^3 \amalg -M$ and a concordance in $W$ from   $2^{n-1}(K \# K^r)$ to a knot $\widetilde{J}_1 \subset M$, and $\widetilde{J}_1$ bounds a surface $\widetilde{F}_1$, homeomorphic to $F_1$ in $M$.  Since $W$ provides  $\zz_{(2)}$--homology cobordism from $M$ to  $S^3$, by 
  attaching a $4$--ball to $W$ we see that $M$ bounds a $\zz_{(2)}$--homology ball $B$.  The union of $W$ and $B$ (along $M$) forms a $\zz_{(2)}$--homology ball with boundary $S^3$.  The concordance $\widetilde{S}_1 \cup \widetilde{F}_1$ is a surface  bounded by  $2^{n-1}(K \# K^r) \subset S^3$.    Thus, since $\nu$ provides a lower bound for the genus of such surfaces, we conclude that $\text{genus}(F_1) \ge |\nu(2^{n-1} (K \# K^r))| = 2^n|\nu(K)|$.

 \end{proof}
 
 \section{Conclusion}

There is a homomorphism (described in~\cite{le2}) from the concordance group of knots in $S^3$ onto Levine's algebraic concordance group $\calg$.  It is known that $\calg \cong (\zz/2\zz)^\infty \oplus (\zz/4\zz)^\infty \oplus \zz ^\infty$.  Furthermore, a knot represents torsion in $\calg$ if and only if all the Levine-Tristram signatures vanish.  Thus, one immediate  implication of our results is the following:

 \begin{theorem} If $K$ is of infinite order in Levine's algebraic concordance group $\calg$  and $B_n(K)$ is concordant to a boundary link   $J = \partial (F_1, \ldots, F_{2^n})$, then genus($F_i) \ge 2^n$ for all $i$.
 
 \end{theorem}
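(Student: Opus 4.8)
The plan is to read this off directly from Theorem~\ref{mainthm}. By that theorem it suffices, given a knot $K$ of infinite order in $\calg$, to produce one of the admissible invariants $\nu$ (additive, unchanged under string‑orientation reversal, bounding the genus of a surface bounded by $K$ in any $\zz_{(2)}$--homology ball) with $|\nu(K)|\ge 1$; Theorem~\ref{mainthm} then gives $\text{genus}(F_i)\ge 2^n|\nu(K)|\ge 2^n$ for every $i$.

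To carry this out I would first locate a nonzero signature. Since $K$ has infinite order in $\calg$ it is in particular not of finite order, so by the characterization recalled above (a knot is torsion in $\calg$ if and only if all of its Levine--Tristram signatures vanish) some Levine--Tristram signature of $K$ is nonzero. The function $\omega\mapsto\sigma_\omega(K)$ on $S^1$ is locally constant away from the finitely many roots of $\Delta_K$, so its nonvanishing locus contains a nonempty open arc; as the roots of unity of odd prime order are dense in $S^1$, I may choose an odd prime $p$ and an index $i$ with $0<i<p$, with $\sigma_{i/p}(K)\ne 0$ and with the relevant root of unity not a root of $\Delta_K$. By the appendix, $\nu:=\tfrac12\sigma_{i/p}$ is then one of the invariants to which Theorem~\ref{mainthm} applies. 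Finally, the Levine--Tristram signature of a knot is an even integer wherever the Alexander polynomial is nonzero (it is the signature of a nonsingular Hermitian form on the even‑rank first homology of a Seifert surface), so $\nu(K)$ is a nonzero integer and $|\nu(K)|\ge 1$. Applying Theorem~\ref{mainthm} with this $\nu$ yields $\text{genus}(F_i)\ge 2^n|\nu(K)|\ge 2^n$ for all $i$, as claimed. (When one is in the smooth category and $\tau(K)\ne 0$ one could equally take $\nu=\tau$, using that $\tau$ is integer‑valued.)

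Honestly there is no serious obstacle here: the statement is a formal corollary of Theorem~\ref{mainthm} together with the classification of $\calg$ recalled in this section. The only two points that deserve a word of care are (i) the density remark, which is what guarantees that the nonzero signature we extracted can be taken to be one of the normalized $p$--signatures on the admissible list of the Introduction rather than a signature at an arbitrary point of $S^1$; and (ii) the evenness of $\sigma_{i/p}(K)$, which is exactly what upgrades ``$\sigma_{i/p}(K)\ne 0$'' to ``$|\tfrac12\sigma_{i/p}(K)|\ge 1$'' and hence gives the sharp bound $2^n$ rather than $2^{n-1}$. Both of these are elementary, and the verification that $\tfrac12\sigma_{i/p}$ satisfies the hypotheses of Theorem~\ref{mainthm} is already contained in the appendix.
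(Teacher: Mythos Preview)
Your proposal is correct and follows essentially the same approach as the paper: the theorem is stated there as an immediate implication of Theorem~\ref{mainthm} together with the fact (recalled just before the statement) that a knot is torsion in $\calg$ if and only if all its Levine--Tristram signatures vanish. You have simply made explicit the two details the paper leaves implicit---the density argument pinning the nonzero signature to a $p$--th root of unity, and the evenness of $\sigma_{i/p}(K)$ needed to upgrade to $|\nu(K)|\ge 1$---so there is nothing substantively different to compare.
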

 
 This leaves open the case of knots that represent torsion (either geometric or algebraic) in concordance.  The first example is the figure eight knot.  We close with a question:\vskip.2in
 
 \noindent {\bf Problem.}  Let $K$ be the figure eight knot $4_1$.  Suppose that $B_n(K)$ is concordant to a link $J$ and $J = \partial (F_1, \ldots ,F_{2^n})$.  What is the minimum value of genus($F_i)$? 
 
  \appendix
  \section{Signatures as $\zz_{(2)}$ genus bounds.}
  
  In applying Theorem~\ref{mainthm} in the case that $\nu$ is the Levine-Tristram signature, we use the following result.   We need only a weaker result, using $\zz_{(2)}$ coefficients, but the proof is the same in the more general setting.
  
  \begin{theorem}\label{signthm1} Let $(W, F)$ be a pair in which $W$ is compact 4--manifold with $\partial W = S^3$, $H_i(W, \qq) = 0$ for $i>0$, and $F$ has nonempty connected boundary. Then genus($F) \ge \frac{1}{2}\sigma_{i/p}(\partial F)$.
  \end{theorem}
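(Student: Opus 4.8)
The plan is to establish the genus bound by building a closed 4-manifold from the given data, computing its signature in two ways, and comparing. First I would form the double branched cover $V$ of $W$ along a pushed-in copy of $F$ (pushed into $W$ so that $F$ meets $\partial W = S^3$ only in $\partial F$), obtaining a 4-manifold with boundary the double branched cover $\Sigma_2(S^3, \partial F)$ of $S^3$ along the knot $\partial F$. The relevant signature $\sigma_{i/p}(\partial F)$ is, by the classical work of Viro and Kauffman–Taylor, the signature defect $\sigma(V) - \sigma(W)$ suitably interpreted, or more precisely it arises as the $G$-signature of the $\zz/2$-action on $V$; working with $\qq$-coefficients is exactly what lets one diagonalize and split into eigenspaces.

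The key steps, in order, would be: (1) push $F$ into the interior of $W$, keeping $\partial F$ fixed, and take the double cover $V \to W$ branched along $F$; (2) use $H_i(W;\qq) = 0$ for $i>0$ together with a Mayer–Vietoris / transfer argument to control $H_*(V;\qq)$, showing that the only interesting homology of $V$ comes from the branch surface, so that $\operatorname{rank} H_2(V;\qq) \le 2\,\mathrm{genus}(F) + (\text{bounded contributions from } \partial F)$; (3) identify the eigenspace decomposition of $H_2(V;\qq)$ under the deck transformation, so that the intersection form restricted to the $(-1)$-eigenspace has rank at most $2\,\mathrm{genus}(F)$; (4) invoke the $G$-signature theorem (or Viro's formula for the signature of branched covers of 4-manifolds over surfaces) to identify the signature of that eigenspace form with $\sigma_{i/p}(\partial F)$ — here the crucial input is that $\partial W = S^3$ and $\partial F$ is connected, so the boundary contribution is precisely one Tristram–Levine signature and there is no correction term from a nontrivial $H_1(\partial W)$; (5) conclude $|\sigma_{i/p}(\partial F)| \le \operatorname{rank}(\text{that form}) \le 2\,\mathrm{genus}(F)$, which is the claimed inequality after dividing by $2$.

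The main obstacle I expect is step (4): correctly matching the $G$-signature contribution of the branch locus $F$ with the boundary knot signature $\sigma_{i/p}(\partial F)$, i.e. getting the Atiyah–Singer $G$-signature theorem for manifolds with boundary to yield exactly the Tristram–Levine signature of $\partial F$ rather than that quantity plus some self-intersection or framing correction of $F$. This is where one must be careful that $F$ has \emph{nonempty connected} boundary and lies properly in $W$ with interior pushed in; the self-intersection number $[F]\cdot[F]$ and any Euler-number contribution must be shown to vanish or to be absorbed, and the rho-invariant / eta-invariant boundary term must be identified with $\sigma_{i/p}$. A secondary subtlety is ensuring the rank estimate in step (2)–(3) is clean over $\qq$; this is standard for double branched covers (the transfer map splits $H_*(V;\qq) = H_*(W;\qq) \oplus H_*(W, \partial F; \text{twisted})$ type statements), but one must phrase it so that the $(-1)$-eigenspace rank is controlled purely by $b_1(F) = 2\,\mathrm{genus}(F)$ given that $\partial F$ is connected. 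Once these homological and $G$-signature bookkeeping points are pinned down, the inequality follows immediately.
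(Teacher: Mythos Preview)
Your outline contains a real gap: taking the \emph{double} branched cover along $F$ and analyzing the $\zz/2$-action can only detect the signature at the eigenvalue $-1$, that is $\sigma_{1/2}(\partial F)$, the classical Murasugi signature. The statement concerns the general Tristram--Levine signature $\sigma_{i/p}(\partial F)$, and for that you must take the $p$-fold cyclic branched cover along $F$ and look at the $e^{2\pi i/p}$-eigenspace of the $\zz/p$-action. So step~(4) as written cannot produce $\sigma_{i/p}$ for $p>2$; there is simply no $\omega_p$-eigenspace in a $\zz/2$-cover.

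Once you replace ``double'' by ``$p$-fold'' throughout, a second issue appears that you have not addressed: the $p$-fold cyclic branched cover of $W$ along $F$ need not exist for every prime $p$. One has a short exact sequence $0\to\zz\to H_1(W\setminus F)\to H_1(W)\to 0$ with the meridian generating the $\zz$; since $W$ is only a rational homology ball, $H_1(W;\zz)$ may carry nontrivial torsion, and for primes $p$ dividing that torsion order there may be no surjection $H_1(W\setminus F)\to\zz/p$ sending the meridian to a generator. The paper handles exactly this obstacle by first proving the bound for an infinite (cofinite) set of primes and then invoking the fact that $\theta\mapsto\sigma_\theta(\partial F)$ is an integer-valued step function with finitely many jumps, so a bound on a dense set extends to all $\theta$. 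Your proposal needs this density step as well.

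For comparison, the paper does \emph{not} take branched covers of $W$ along $F$. Instead it attaches a $0$-framed $2$-handle to $W$ along $\partial F$, performs surgeries along $g$ circles on the capped-off surface and then on the resulting $2$-sphere, and arrives at a $4$-manifold $W_3$ with $H_1(W_3;\qq)\cong\qq$ and $\beta_2(W_3)=2g$; it then takes the \emph{unbranched} $p$-fold cyclic cover determined by $H_1(W_3)\to\zz_p$ and uses an Euler characteristic count to show each nontrivial eigenspace of $H_2(\widetilde{W}_3;\cc)$ has dimension exactly $2g$. Your branched-cover route (once corrected to $p$-fold covers and supplemented with the density argument) is a legitimate alternative in the Viro/Kauffman--Taylor tradition, and arguably more direct geometrically; the paper's surgery-and-unbranched-cover route, following Litherland, trades the $G$-signature theorem for a bordism-theoretic well-definedness argument and a Milnor exact sequence computation.
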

  
Although this is known to experts, we can find no proof in the literature and so outline one here.  The argument depends on a 4--dimensional interpretation of the signature, which we build and show is well-defined in a series of steps.  The structure of the argument is much the same as that presented by Litherland~\cite{litherland}, which applies in the case that $W$ is an integral (as opposed to rational) homology ball.  Related arguments can be found in~\cite{cg}.

\begin{definition} Let $(W,\rho)$ be a pair consisting of a compact 4--manifold $W$ and a homomorphism $\rho \co H_1(W) \to \zz_p$. Define $\sigma(W,  {\rho}) = \sigma_{\omega_p}(\widetilde{W}) - \sigma(W)$, where $\omega_p = e^{2 \pi i /p}$, $\widetilde{W}$ is the associated cyclic $p$--fold cover of $W$, $\sigma(W)$ is the signature of the intersection form on $H_2(W)$, and $ \sigma_{\omega_p}(\widetilde{W})$ is the signature of the intersection form of $H_2(\widetilde{W},\cc)$ restricted to the $\omega_p$--eigenspace of the generator of the  deck transformation of the cover. 
\end{definition}

\begin{theorem}\label{signvanish}  If $W$ is closed, then $\sigma_{\rho}(W) = 0$.
\end{theorem}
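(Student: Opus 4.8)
The plan is to use the standard $G$-signature/Atiyah--Singer philosophy specialized to a $p$-fold cyclic cover, in the form that says the eigenspace signatures of a cyclic cover of a \emph{closed} $4$-manifold are controlled by characteristic-class data that is insensitive to the choice of $\rho$. Concretely, write $\widetilde{W} \to W$ for the cyclic $p$-fold cover determined by $\rho$, let $t$ generate the deck group $\zz_p$, and decompose $H_2(\widetilde{W};\cc) = \bigoplus_{j=0}^{p-1} V_{\omega_p^j}$ into eigenspaces of $t_*$. First I would record that $\sigma_{\omega_p^0}(\widetilde{W})$, the signature on the invariant part, equals $\sigma(W)$ (transfer argument: the invariant cohomology is $H^*(W;\cc)$ and the intersection form is multiplied by $p$, which does not change the signature). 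Summing over all $j$ gives $\sigma(\widetilde{W}) = \sum_{j=0}^{p-1}\sigma_{\omega_p^j}(\widetilde{W})$.

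Next I would bring in the multiplicativity of the signature in finite covers of \emph{closed} manifolds: $\sigma(\widetilde{W}) = p\cdot \sigma(W)$. Combining this with the previous paragraph yields $\sum_{j=1}^{p-1}\sigma_{\omega_p^j}(\widetilde{W}) = (p-1)\,\sigma(W)$, i.e.\ the \emph{average} of the nontrivial eigenspace signatures equals $\sigma(W)$. To upgrade this from an average statement to the individual statement $\sigma_{\omega_p^j}(\widetilde{W}) = \sigma(W)$ for each $j$, I would invoke the $G$-signature theorem: for the free $\zz_p$-action on $\widetilde{W}$, the Lefschetz-type signature $\sum_j \omega_p^{jk}\,\sigma_{\omega_p^j}(\widetilde{W})$ is computed by a fixed-point formula, and since the action is \emph{free} all fixed-point contributions vanish, giving $\sum_{j=0}^{p-1}\omega_p^{jk}\,\sigma_{\omega_p^j}(\widetilde{W}) = 0$ for every $k \not\equiv 0 \pmod p$. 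Together with the $k=0$ relation $\sum_j \sigma_{\omega_p^j}(\widetilde{W}) = p\,\sigma(W)$, Fourier inversion over $\zz_p$ forces $\sigma_{\omega_p^j}(\widetilde{W}) = \sigma(W)$ for all $j$. In particular $\sigma_{\omega_p}(\widetilde{W}) - \sigma(W) = 0$, which is exactly $\sigma_\rho(W) = 0$.

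An alternative, more elementary route avoids the $G$-signature theorem entirely: one can note that the eigenspace signatures $\sigma_{\omega_p^j}(\widetilde{W})$ and $\sigma_{\overline{\omega_p^j}}(\widetilde{W})$ agree (complex conjugation swaps the eigenspaces and preserves the Hermitian form up to sign conventions), and then use a bordism argument — the pair $(W,\rho)$ represents a class in $\Omega_4^{SO}(B\zz_p)$, this bordism group is finite away from the image of $\Omega_4^{SO}$, $\sigma_\rho$ is a bordism invariant of $(W,\rho)$ vanishing on the subgroup coming from $\Omega_4^{SO}$ (where $\rho$ is trivial), and $\sigma_\rho$ takes values in $\zz$, hence must vanish on a finite group. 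I would likely present whichever of these is cleanest for the paper's purposes, probably the bordism argument since it meshes with the Litherland-style setup being cited, but mention the $G$-signature computation as the conceptual reason.

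The main obstacle I anticipate is purely bookkeeping: pinning down the precise sign/eigenvalue conventions so that "the signature on the $\omega_p$-eigenspace" is the right Hermitian-form signature, and checking that the conjugate eigenspaces pair up correctly, so that the finiteness argument actually lands in $\zz$ rather than, say, picking up a factor that could be a non-integral rational. A secondary point needing care is the closed-manifold multiplicativity step $\sigma(\widetilde{W}) = p\,\sigma(W)$, which genuinely uses that $W$ has no boundary — this is exactly why the theorem is stated for closed $W$, and it is the hypothesis I would be most careful to flag where it is used.
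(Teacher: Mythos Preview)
Your proposal is correct, and in fact your ``alternative'' bordism route is essentially what the paper does: it observes that $\Omega_4^{SO}(B\zz_p)\cong\zz$ is generated by $(\cc P^2,\text{trivial }\rho)$, so after connected sum with copies of $\pm\cc P^2$ the pair $(W,\rho)$ bounds over $\zz_p$; since connected sum with $\pm\cc P^2$ leaves the difference $\sigma_{\omega_p}(\widetilde{W})-\sigma(W)$ unchanged and bounding pairs have vanishing signatures, $\sigma_\rho(W)=0$. Your phrasing (``finite away from the image of $\Omega_4^{SO}$'' plus $\sigma_\rho$ integer-valued and a bordism invariant vanishing on trivial $\rho$) is a slightly softer version of the same idea.

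Your primary route via the $G$-signature theorem is a genuinely different and also valid argument. It buys you the sharper conclusion that \emph{every} eigenspace signature equals $\sigma(W)$, not just the $\omega_p$-eigenspace, and it makes transparent exactly where closedness enters (multiplicativity $\sigma(\widetilde{W})=p\,\sigma(W)$ and the fixed-point-free vanishing of $\text{sign}(t^k,\widetilde{W})$). The cost is invoking the $G$-signature theorem, which is heavier machinery than the paper needs; the bordism argument requires only the computation of $\Omega_4^{SO}(B\zz_p)$ and Novikov additivity, and fits more naturally with the Litherland-style framework the appendix is building. Your instinct to present the bordism argument is the right one here.
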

\begin{proof}Bordism theory~\cite{cf} implies that the bordism group of pairs $(W,\rho)$ (with $W$ closed) is isomorphic to $\zz$, generated by $CP^2$ with trivial representation.  Thus, forming the connected sum of $(W,\rho)$ with copies of $\pm CP^2$ yields a pair that bounds a compact 5--manifold over $\zz_p$.  For bounding manifolds the signatures vanish, but the connected sum does not alter the difference of signatures.
\end{proof}

\begin{definition}Let $(M,\rho)$ be a pair consisting of a closed 3--manifold with a homomorphism $\rho \co H_1(M) \to \zz_p$.  Bordism theory implies that $p(M,\rho)$ bounds a pair $(W, \tilde{\rho}_p)$.  Define $\sigma(M,\rho) = \frac{1}{p} \sigma(W,  {\rho}) \in \qq$.
\end{definition}

\begin{theorem}  The value of 
$\sigma(M,\rho) $ is independent of the choice of  $(W, \tilde{\rho}_p)$.
\end{theorem}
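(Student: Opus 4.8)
The plan is to show well-definedness by the standard "difference of two bounding manifolds" argument. Suppose $(W, \tilde\rho_p)$ and $(W', \tilde\rho'_p)$ are two pairs, each with closed boundary $p$ copies of $(M,\rho)$ (with the representation restricting correctly on each copy), that bound $p(M,\rho)$. Then I would form the closed pair $V = W \cup_{\partial} (-W')$, gluing along the $p$ copies of $M$, and observe that the representations $\tilde\rho_p$ and $\tilde\rho'_p$ agree on the boundary (both restrict to $\rho$ on each copy of $M$) and hence glue to a representation $\rho_V \co H_1(V) \to \zz_p$. Since $V$ is closed, Theorem~\ref{signvanish} gives $\sigma_{\rho_V}(V) = 0$.

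The key remaining step is additivity of the twisted signature defect $\sigma(-,\rho)$ under gluing along boundary. Here I would invoke Novikov additivity for both the ordinary signature $\sigma$ and, applied to the $p$-fold covers, the $\omega_p$-twisted signature $\sigma_{\omega_p}$. The one subtlety is that the $p$-fold cyclic cover of $V$ associated to $\rho_V$ restricts on the gluing hypersurface $M$ (in fact on the disjoint union of $p$ copies of $M$) to the cover determined by $\rho$, so the covers of $W$ and $W'$ genuinely glue along a common hypersurface and Novikov additivity applies to $\sigma_{\omega_p}(\widetilde{V}) = \sigma_{\omega_p}(\widetilde{W}) + \sigma_{\omega_p}(-\widetilde{W'})$. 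The correction term that appears in Wall's non-additivity formula is supported on the homology of the separating $3$-manifold and the relevant Maslov-type invariant vanishes here; for the application one only needs that both signatures are additive up to a term depending solely on the boundary $3$-manifold with its representation, and that such boundary terms cancel in the difference. Combining, $0 = \sigma_{\rho_V}(V) = \sigma(W,\tilde\rho_p) - \sigma(W',\tilde\rho'_p)$, hence $\frac1p\sigma(W,\tilde\rho_p) = \frac1p\sigma(W',\tilde\rho'_p)$, which is the claim.

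The main obstacle I anticipate is making the additivity step fully rigorous: Novikov additivity for the ordinary signature of a manifold glued along a boundary component is classical, but the twisted version requires care because one is taking signatures on eigenspaces of a deck transformation, and one must check that the eigenspace decomposition is compatible with the Mayer--Vietoris sequence for $\widetilde V = \widetilde W \cup \widetilde{W'}$. An alternative, cleaner route that avoids Wall non-additivity altogether is to cap off: since the boundary of $W$ consists of $p$ disjoint copies of $M$, one can instead argue that $\sigma(W,\tilde\rho_p) \bmod (\text{bounding contributions})$ is a bordism invariant of $p(M,\rho) = \partial(W,\tilde\rho_p)$ directly. Concretely, if $(W,\tilde\rho_p)$ and $(W',\tilde\rho'_p)$ both bound $p(M,\rho)$, then $W \cup_{pM} (-W')$ bounds nothing but is closed, so one applies Theorem~\ref{signvanish} and the additivity of the signature defect across the gluing; this is exactly the argument above, and whichever packaging one chooses, the essential input is that the signature defect is additive under boundary-connected gluing with matching representations. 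I would present it in the capping form since it most closely parallels Litherland's treatment cited in the text.
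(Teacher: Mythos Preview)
Your approach is correct and essentially identical to the paper's: form the closed pair $W \cup_{\partial}(-W')$, apply Theorem~\ref{signvanish}, and invoke additivity of the (ordinary and eigenspace) signatures under gluing along the common boundary. Your worry about Wall non-additivity is unnecessary here, since you are gluing along the \emph{entire} boundary, which is precisely the situation in which Novikov additivity holds without correction term; the paper simply cites ``additivity of signature under boundary union'' and leaves it at that.
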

\begin{proof} Given two choices, $(W_1, \tilde{\rho}^1 )$ and $(W_2, \tilde{\rho}^2 )$, the union $(W_1, \tilde{\rho}^1 ) \cup -(W_2, \tilde{\rho}^2 )$  can be formed.  Now use the additivity of signature under boundary union and Theorem~\ref{signvanish}.
\end{proof}
  
  \vskip.1in
 \noindent{\bf Note} If $\rho$ factors through $\zz$, then, since the bordism group $\Omega_3(\zz)$ vanishes, the pair $(W, \tilde{\rho})$ used to compute $\sigma(M,\rho)$ can be taken to be $p$ disjoint copies of a pair bounded by $(M,\rho)$.    It follows that in this case the signature is an integer. 
 \vskip.1in
 
 \begin{definition} Let $K\subset S^3$ be a knot.  Let $(M(K), \rho)$ be 0--surgery on $K$ with the canonical homomorphism onto $\zz_p$.  We define $\sigma_p(K) = \sigma (M(K),{\rho})$.
 \end{definition}
 
 \begin{theorem}$\sigma_p(K) = \sigma_{1/p}(K)$, the Levine-Tristam signature, defined in terms of a Seifert matrix.
 \end{theorem}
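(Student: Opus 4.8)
The goal is to identify the 4-dimensional signature invariant $\sigma_p(K) = \sigma(M(K),\rho)$, defined via bordism and the difference $\sigma_{\omega_p}(\widetilde{W}) - \sigma(W)$ divided by $p$, with the classical Levine–Tristram signature $\sigma_{1/p}(K)$ computed from a Seifert matrix $V$. My plan is to produce an explicit bounding pair $(W,\tilde\rho)$ for $p(M(K),\rho)$ and compute the invariant directly.

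First I would build a convenient bounding 4-manifold. Take a Seifert surface $\Sigma$ for $K$ with Seifert matrix $V$, push it into $B^4$, and let $W_0$ be the $p$-fold cyclic branched cover of $B^4$ along $\Sigma$ (equivalently, take the infinite cyclic cover's truncation, or build $W_0$ from $p$ copies of $B^4 \setminus \Sigma$ glued cyclically together and then fill in the branch locus). A standard computation — this is exactly the argument underlying the original Viro–Kauffman–Tristram interpretation of the signature, and the version in Litherland~\cite{litherland} — shows that $\partial W_0$ is the $p$-fold cyclic branched cover $\Sigma_p(K)$ of $S^3$ along $K$, that $H_2(W_0;\cc)$ carries an action of the deck group, and that on the $\omega_p$-eigenspace the intersection form is congruent to $(1-\omega_p)V + (1-\bar\omega_p)V^T$, whose signature is by definition $\sigma_{1/p}(K)$; meanwhile $\sigma(W_0)=0$ since the symmetrized form $V+V^T$ on the untwisted part is (after accounting for the branching) metabolic or the ordinary signature of $W_0$ vanishes for the branched cover of $B^4$. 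So $\sigma(W_0,\rho_0) = \sigma_{1/p}(K)$ where $\rho_0$ is the canonical map to $\zz_p$.

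Second, I would relate this branched-cover manifold to an \emph{unbranched} bounding pair for $p(M(K),\rho)$, which is what the definition of $\sigma(M,\rho)$ literally requires. The branched cover $\Sigma_p(K)$ and the manifold $M(K)$ (zero-surgery) differ in a controlled way; one passes between "branched over $K$" and "$\zz_p$-cover of the knot complement / zero-surgery" by removing or filling a neighborhood of the branch locus and attaching 2-handles. Concretely, I would take $W = W_0$ with a neighborhood of the branch surface drilled out and appropriate 2-handles attached so that $\partial W$ becomes $p$ copies of $M(K)$ carrying $\tilde\rho$, or alternatively observe that $(M(K),\rho)$ and the pair coming from $\Sigma_p(K)$ are bordant rel the representation and invoke Theorem~\ref{signvanish} together with additivity of signature to conclude that the two computations of $\sigma(M(K),\rho)$ agree. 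Either way the upshot is $\sigma(M(K),\rho) = \frac{1}{p}\sigma(W_0,\rho_0) = \sigma_{1/p}(K)$.

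The main obstacle is the bookkeeping in the second step: correctly tracking how the intersection form, the signature of $W_0$, and the representation behave under the surgeries relating the branched cover of $B^4$ to a manifold bounding $p$ copies of zero-surgery, and in particular confirming that the handle attachments and the passage to eigenspaces do not introduce extra signature contributions. This is where one must be careful about the difference between $H_2(\widetilde W;\cc)$ restricted to eigenspaces versus the equivariant intersection form on the branched cover, and about orientation/sign conventions in the formula $\sigma((1-\omega)V + (1-\bar\omega)V^T)$. The homological input is routine (Mayer–Vietoris on the cyclic cover, as in Lemma~\ref{lem1}), but assembling it into a clean identification requires care, and it is essentially the content of Litherland's argument adapted from the homology-ball case to the rational-homology-ball case already treated in the preceding theorems.
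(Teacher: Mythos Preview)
Your route is different from the paper's. You work through the $p$-fold \emph{branched} cover $W_0$ of $B^4$ along a pushed-in Seifert surface, whose $\omega_p$-eigenspace signature is classically the Levine--Tristram signature, and then propose to relate this to $\sigma(M(K),\rho)$. The paper instead builds, by surgery, a 4-manifold with boundary $M(K)$ over which $\rho$ extends, so that the definition applies directly: attach a 0-framed 2-handle to $B^4$ along $K$ to get $W_1$ (with $\partial W_1 = M(K)$), in which the capped-off Seifert surface $\bar F$ generates $H_2$; surger $g$ circles on $\bar F$ to produce $W_2$ with $H_2 \cong \zz^{2g}$, $H_1 = 0$, and $\bar F$ now a sphere $\bar F'$; finally surger $\bar F'$ to obtain $W_3$ with $H_1(W_3) \cong \zz$. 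The representation $\rho$ extends over $W_3$, and the eigenspace signatures of the honest cyclic $\zz_p$-cover $\widetilde{W_3}$ are then computed from the Seifert matrix as in Litherland, yielding the form $(1-\omega_p)V + (1-\bar\omega_p)V^t$.

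The advantage of the paper's construction is that it lands immediately on a pair $(W_3,\rho)$ bounding $(M(K),\rho)$, so there is no translation step. In your outline the ``second step'' is the substantive part, not mere bookkeeping: your $W_0$ is already the cover rather than a base carrying a $\zz_p$-representation (so the expression $\sigma(W_0,\rho_0)$ does not literally fit the definition $\sigma(W,\rho)=\sigma_{\omega_p}(\widetilde W)-\sigma(W)$), and $\partial W_0 = \Sigma_p(K)$, not $M(K)$. Converting the branched picture into an unbranched $\zz_p$-cover of something bounding $M(K)$ requires precisely a surgery/handle construction of the type the paper carries out; your sketch (``drill out the branch surface and attach 2-handles'' or ``bordant rel the representation'') does not actually supply it.
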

 \begin{proof} For the proof  we could refer directly to Litherland's paper~\cite{litherland}, but we will provide some steps of a construction, since they are needed further on.  
 
 Let $K$ bound a Seifert surface $F$.  Then adding a 0--framed  2--handle to $B^4$ along $K$  yields a 4--manifold $W_1$ with boundary $M(K)$. The generator of $H_2(W_1) = \zz$ is represented by a pushed-in Seifert surface union the core disk of the 2--handle.  This surface we denote $\bar{F}$.
 
 If $F$ has genus $g$, then surgery on $g$ circles in $W_1$ (each lying on $\bar{F}$) yields a 4--manifold $W_2$ bounded by $M(K)$ with $H_2(W_2) = \zz^{2g}$ and $H_1(W_2) = 0$.  The surface $\bar{F}$ can be  simultaneously surgered to be a 2--sphere, $\bar{F}'$.
 
 Now $W_2$ can be surgered along $\bar{F}'$ to yield a 4--manifold $W_3$ with boundary $M(K)$ satisfying $H_2(W_3) = \zz^{2g}$ and $H_1(W_3) = \zz$.  The homomorphism $\rho\co H_1(M(K)) \to \zz_p$ extends to $W_3$, so $W_3$ can be used to compute $\sigma(M(K),\rho)$.  The relevant signatures can be computed explicitly in terms of the Seifert matrix; it follows that $\sigma(M(K),\rho)$ is the signature of the Levine-Tristram matrix $(1 - \omega_p)V + (1 - \bar{\omega}_p)V^t$, as desired.  For details of this final step, see~\cite{litherland}.
 \end{proof}

\begin{theorem}\label{zpthm}If $K\subset S^3$ bounds a surface $F$ of genus $g$ in a compact 4--manifold $W$ where $H_i(W, \qq) = 0$ for $i> 0$, then $|\sigma_p(K)| \le 2g$ for an infinite set of primes $p$.
\end{theorem}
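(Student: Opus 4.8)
The plan is to exploit the four--dimensional interpretation of $\sigma_p$ built above, running the construction from the proof that $\sigma_p(K)=\sigma_{1/p}(K)$ but with the rational homology ball $W$ in place of $B^4$. First I would push $F$ slightly into the interior, let $C_0$ be the trace of $0$--framed surgery on $K$, and set $W_1=W\cup_{S^3}C_0$, so that $\partial W_1=M(K)$. Let $\bar F\subset W_1$ be the closed genus--$g$ surface obtained by capping $F$ with the core of the $2$--handle. A Mayer--Vietoris computation shows $H_*(W_1;\qq)$ is that of a point except that $H_2(W_1;\qq)\cong\qq$ is generated by $[\bar F]$; since the handle is $0$--framed, $[\bar F]\cdot[\bar F]=0$, so the intersection form of $W_1$ vanishes and $\sigma(W_1)=0$. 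Also $H_1(W_1;\zz)\cong H_1(W;\zz)$, which is finite.

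Call a prime $p$ \emph{admissible} if $p\nmid|H_1(W;\zz)|$ and no $p$--th root of unity is a root of $\Delta_K$ or of the order polynomial of the infinite cyclic cover of $W$ associated to $F$; this excludes only finitely many primes. For admissible $p$, the class $[\bar F]$ admits a $\zz_p$--reduction (there is a homomorphism $H_1(W_1\setminus\bar F;\zz)\to\zz_p$ sending a meridian $m$ of $\bar F$ to a generator, using $[\bar F]^2=0$ together with $p\nmid|H_1(W;\zz)|$), so the $p$--fold cyclic branched cover $\pi\co\Sigma_p\to W_1$ along $\bar F$ exists. Because $\bar F\cap\partial W_1=\emptyset$ and $m$ maps to the meridian generating $H_1(M(K))$, the restriction of $\pi$ to the boundary is the cyclic $\zz_p$--cover of $M(K)$ classified by the canonical $\rho$. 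Deleting $N(\bar F)\cong\bar F\times D^2$, the complement $W_1\setminus N(\bar F)$, with its $\zz_p$--character, is a bordism (in $\Omega_3(B\zz_p)$) from $(M(K),\rho)$ to $(\bar F\times S^1,\mathrm{pr}_{S^1})$; the latter bounds $(H_g\times S^1,\mathrm{pr}_{S^1})$ for a genus--$g$ handlebody $H_g$, and the associated signature defect $\sigma_{\omega_p}(\widetilde{H_g\times S^1})-\sigma(H_g\times S^1)=0-0=0$. Combining additivity of the signature defect with $\sigma(W_1)=0=\sigma(\bar F\times D^2)$ (which, by Novikov additivity, forces $\sigma(W_1\setminus N(\bar F))=0$) and with the fact that the branch--locus neighbourhood contributes only to the trivial deck--eigenvalue, I would obtain $\sigma_p(K)=\sigma(M(K),\rho)=\sigma_{\omega_p}(\Sigma_p)$, the signature of the Hermitian form on the $\omega_p$--eigenspace $E_{\omega_p}\subset H_2(\Sigma_p;\cc)$.

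It then remains to bound $\dim_\cc E_{\omega_p}$ by $2g$. An Euler characteristic count gives $\chi(\Sigma_p)=p\,\chi(W_1)-(p-1)\chi(\bar F)=2+2g(p-1)$. The $\zz_p$--invariant part of $H_*(\Sigma_p;\cc)$ equals $H_*(\Sigma_p/\zz_p;\cc)=H_*(W_1;\cc)$, so $\dim E_1=\dim H_2(W_1;\cc)=1$; and for admissible $p$ one checks $b_1(\Sigma_p;\qq)=b_3(\Sigma_p;\qq)=0$ (this is exactly where the conditions on $\Delta_K$ and on the cover of $(W,F)$ are used, to kill the rational $H_1$ of the intermediate covers). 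Then $b_2(\Sigma_p;\qq)=\chi(\Sigma_p)-1=1+2g(p-1)$, so the $p-1$ nontrivial eigenspaces $E_{\omega_p^j}$ together span $2g(p-1)$ dimensions; since $\Sigma_p$ and its $\zz_p$--action are defined over $\qq$, the Galois action on the $p$--th roots of unity permutes these eigenspaces transitively, forcing $\dim_\cc E_{\omega_p}=2g$. Hence $|\sigma_p(K)|=|\sigma_{\omega_p}(\Sigma_p)|\le\dim_\cc E_{\omega_p}=2g=2\,\mathrm{genus}(F)$ for every admissible prime $p$, completing the proof.

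The main obstacle is precisely the homological bookkeeping concentrated in the second and third steps: identifying the right (cofinite, hence infinite) set of admissible primes so that the branched cover exists and the intermediate covers have vanishing rational $H_1$; pinning down $\dim E_1=1$ and $b_1(\Sigma_p;\qq)=0$ so that the $\omega_p$--eigenspace is exactly $2g$--dimensional; and verifying the auxiliary vanishings ($\sigma(W_1\setminus N(\bar F))=0$ and the vanishing of all signature defects attached to $\bar F\times S^1$ and $H_g\times S^1$) that collapse $\sigma(M(K),\rho)$ to $\sigma_{\omega_p}(\Sigma_p)$ on the nose. This is exactly the point at which one is forced to pass to an infinite set of primes rather than all primes — as in the Murasugi/Kauffman--Taylor inequality that this generalizes — and at which the use of $\zz_{(2)}$, or $\zz$, rather than $\zz/2\zz$ coefficients becomes essential.
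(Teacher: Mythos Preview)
Your argument is correct and takes a genuinely different route from the paper's. Both begin the same way: attach a $0$--framed $2$--handle to $W$ along $K$ to obtain $W_1$ with $\partial W_1=M(K)$, and cap $F$ with the core to a closed genus--$g$ surface $\bar F$ of square zero. From here the paper performs two rounds of surgery --- first on $g$ circles in $\bar F$ to compress it to a sphere $\bar F'$, then along $\bar F'$ itself --- producing $W_3$ with $H_1(W_3;\qq)=\qq$ and $H_2(W_3;\qq)=\qq^{2g}$; it then takes the \emph{unbranched} $p$--fold cyclic cover $\widetilde W_3$ and uses the Milnor exact sequence with $\zz_p$--coefficients to force $H_1(\widetilde W_3;\zz_p)=\zz_p$, whence an Euler--characteristic count gives $\beta_2(\widetilde W_3)=2pg$ and each nontrivial eigenspace is $2g$--dimensional. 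You instead avoid surgery and pass directly to the \emph{branched} $p$--fold cover $\Sigma_p\to W_1$ along $\bar F$, reducing $\sigma_p(K)$ to $\sigma_{\omega_p}(\Sigma_p)$ via the cobordism $W_1\setminus N(\bar F)$ capped by $H_g\times S^1$. The trade--off: the paper's surgeries bake the ``$2g$'' into $H_2$ of the bounding manifold, so the eigenspace bound is nearly automatic once $\beta_1$ of the cover is controlled; your route keeps $W_1$ simple but shifts the burden to proving $b_1(\Sigma_p;\qq)=0$, which (as you rightly flag) needs the $\qq[t,t^{-1}]$--module $H_1$ of the infinite cyclic cover of $W_1\setminus\bar F$ (not ``of $W$ associated to $F$'' as written) to be torsion --- and this does follow from $H_1(W_1\setminus\bar F;\qq)=\qq$. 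Two minor points: your hypothesis on $\Delta_K$ is actually unnecessary, since $b_3(\Sigma_p)=0$ already follows from $b_1(\Sigma_p)=0$ and connectedness of $\partial\Sigma_p$ via Poincar\'e--Lefschetz duality; and your Galois--transitivity argument for $\dim_\cc E_{\omega_p}=2g$ is precisely what the paper uses implicitly when it asserts the eigenspaces are equidimensional.
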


\begin{proof} The construction of the previous proof can be repeated, adding the 2--handle to $W$ instead of to $B^4$.  The result is a 4--manifold $W_3$ with boundary $M(K)$,  $H_3(W_3, \qq) = 0$,   $H_2(W_3, \qq) = \qq^{2g}$, and $H_1(W_3, \qq) = \qq$. Furthermore, the map $  H_1(M(K), \qq) \to   H_1(W_3, \qq) $ is an isomorphism.

From this we conclude  that  $H_1(W_3, \zz) = \zz \oplus G$, where $G$ is a finite group.   It follows that for an infinite set of primes $p$, $H_1(W_3, \zz_p) = \zz_p$. (This holds for any prime that does not divide the order of $G$.    If we further restrict the set of primes so that $p$ does not divide the index of the image of $H_1(M(K))$ in $H_1(W_3)/T \cong \zz$, then we can assume that $H_1(M(K), \zz_p) \to H_1(W_3, \zz_p)$ is an isomorphism.)  Henceforth we select $p$ from this set.

Let $X$ be either $M(K)$ or $W_3$.  There is a unique infinite cyclic cover $\widetilde{X}_\infty \to X$ with deck transformation $T$.  Consider the associated Milnor exact sequence of simplicial chain complexes $$0 \to  C_*(\widetilde{X}_\infty) \xrightarrow{1-T_*}   C_*(\widetilde{X}_\infty) \to C_*( {X}) \to 0.$$
With $\zz_p$ coefficients, we have $$H_1(\tilde{X}_\infty,\zz_p)  \xrightarrow{1-T_*}   H_1(\widetilde{X}_\infty,\zz_p) \to H_1( {X},\zz_p)  \xrightarrow{\phi}  H_0(\widetilde{X}_\infty,\zz_p)  \xrightarrow{1-T_*} H_0 (\widetilde{X}_\infty,\zz_p).$$
The last map is trivial, so the map $\phi$ is an isomorphism (a surjection from $\zz_p$ to itself) and on $H_1(\widetilde{X}_\infty,\zz_p)$ the map $1 - T_*$ is surjective.

It now follows, using $\zz_p$ coefficients, that $1 - (T_*)^p$ is surjective on $ H_1(\widetilde{X}_\infty,\zz_p)$.   Thus, letting $\widetilde{X}$ be the quotient of $\widetilde{X}_\infty$ under the action of $T^p$ (so $\widetilde{X}$ is the $p$--fold cyclic cover of $X$) we have:
 $$H_1(\tilde{X},\zz_p)  \xrightarrow{1- T^p_*}   H_1(\widetilde{X}_\infty,\zz_p) \to H_1( \widetilde{X},\zz_p)  \xrightarrow{\phi}  H_0(\widetilde{X}_\infty,\zz_p)  \xrightarrow{1-T^p_*} H_0 (\widetilde{X}_\infty,\zz_p).$$  The last map is trivial, so, as a consequence, $H_1(\widetilde{X}, \zz_p) = \zz_p$.  

Since $H_1(\widetilde{X})$ maps onto a finite index subgroup of $H_1(X)$, which is infinite, we see that $H_1(\widetilde{X}) = \zz \oplus G$, where $G$ is a finite group of order not divisible by $p$.

The rest of the argument is fairly straightforward.  Using duality and the long exact sequence we find that $H_3(\widetilde{W}, \qq) = 0$.  For the  Euler characteristic of $W$ we have $\chi(W) = 2g$.  By multiplicativity, $\chi(\widetilde{W}) = 2g$.
 Since $\beta_0( \widetilde{W}) = \beta_0( \widetilde{W}) = 1$ 
   and $\beta_3( \widetilde{W}) = 0$, we have $\beta_2 ( \widetilde{W}) = 2pg$.  
  The 1--eigenspace of the deck transformation acting on $H_2 ( \widetilde{W}, \cc) $ is isomorphic
  to $H_2(W, \cc) = \cc^{2g}$.  Thus, each of the $p-1$ other eigenspaces must be exactly $2g$ dimensional, and so  the signature on each 
    is at most $2g$.  The signature of $W$ is 0 (it is built from by surgery from a space with 0 signature, and surgery does not change the signature.)  

This completes the proof.
\end{proof}

\noindent{\bf Comment} Cha notes that a shorter proof of Theorem~\ref{zpthm}, using  more sophisticated tools, is possible. In  his argument, the use of the Milnor exact sequence is replaced with an  application of~\cite[Lemma 3.3]{cha} (which is a consequence of a theorem of Levine~\cite{le3}). Choose any map $W_3 \to S^1$ that is an isomorphism on first homology with $\zz_p$ coefficients.  This can be seen to be 2--connected, so  by~\cite[Lemma 3.3]{cha} we have $H_1(\widetilde{W}_3, \zz_p) \cong H_1(\widetilde{S}^1, \zz_p) \cong \zz_p$.\vskip.2in

The proof of Theorem~\ref{signthm1} now is immediate.  For an infinite set of primes, $\sigma_{1/p}(K)$ has been shown to be bounded by $2g$.  An identical argument, changing eigenvalues, shows that $\sigma_{i/p}(K)$ is bounded by $2g$.  The signature function of $K$, $\sigma_\theta(K)$, is defined for any $\theta$ on the unit circle and is an integer valued step function with a finite number of jump discontinuities, so if it is bounded by $2g$ on a dense set of points, it is bounded by $2g$ everywhere.


\vskip.2in

\newcommand{\etalchar}[1]{$^{#1}$}

\end{document}